\newtheorem{theorem}{Theorem}[section]
\newtheorem{lemma}[theorem]{Lemma}
\newtheorem{proposition}[theorem]{Proposition}
\newtheorem{corollary}[theorem]{Corollary}
\newtheorem{assumption}[theorem]{Assumption}
\theoremstyle{definition}
\newtheorem{definition}[theorem]{Definition}
\newtheorem*{rep@theorem}{\rep@title}
\newcommand{\newreptheorem}[2]{%
\newenvironment{rep#1}[1]{%
 \def\rep@title{#2 \ref{##1}}%
 \begin{rep@theorem}}%
 {\end{rep@theorem}}}
\theoremstyle{remark}
\newtheorem{remark}[theorem]{Remark}
\numberwithin{equation}{section}
\newcommand{\setr}{\mathbb{R}}
\newcommand{\id}{\operatorname{Id}}
\newcommand{\bigo}{\mathcal{O}}
\newcommand{\pd}{\partial}
\newcommand{\supp}{\operatorname{supp}}
\newcommand{\cim}{\operatorname{Im}}
\newcommand{\cre}{\operatorname{Re}}
\newcommand{\abs}[1]{\lvert#1\rvert}
\begin{document}

\title[Transverse Geometric Control Gives Polynomial Decay]{Stabilisation of Waves on Product Manifolds by Boundary Strips}
\author{Ruoyu P. T. Wang}
\address{Department of Mathematics, Northwestern University, Evanston, Illinois 60208}
\email{rptwang@math.northwestern.edu}

\subjclass[2010]{35L05, 47B44}

\date{\today}

\keywords{damped wave, boundary stabilisation, polynomial decay, non-compact manifold, interior impedance problem, product manifold}

\begin{abstract}
We show that a transversely geometrically controlling boundary damping strip is sufficient but not necessary for $t^{-1/2}$-decay of waves on product manifolds. We give a general scheme to turn resolvent estimates for impedance problems on cross-sections to wave decay on product manifolds. 
\end{abstract}

\maketitle
\section{Introduction}
\subsection{Main Results}
Let $X$ be a compact smooth manifold with smooth boundary $\pd X$. Let $Y$ be a complete smooth manifold, not necessarily compact. Throughout the paper, compact manifolds means topologically compact ones with or without boundary. If $\pd Y\neq \emptyset$, we assume that the boundary condition on $\pd Y$ makes $\Delta_y\ge 0$ essentially self-adjoint on $L^2(Y)$. Let $\Gamma=\pd X\times Y$ and consider non-negative boundary damping functions $a,b\in L^{\infty}(\Gamma)$. We always impose an assumption:
\begin{assumption}\label{1t3}
We assume there is non-negative $a_0(x)\in C^{0}(\pd X)$ and constant $c_0>0$ that $c_0\le a(x,y)/a_0(x)\le 1$, $0\le b(x,y)/a_0(x)\le 1$ on $\Gamma$.
\end{assumption}
\begin{definition}[Geometric Control Condition]
We say $\Sigma\subset \pd X$ geometrically controls $X$, if there is $T>0$ that every unit-speed generalised geodesic hits $\Sigma$ non-diffractively in time $T$. This is the assumption of \cite[Theorem 5.5]{blr92}. 
\end{definition}

We consider the damped wave equation on the product manifold $\Omega=X\times Y$:
\begin{gather}\label{1l2}
(\pd_t^2+\Delta_x+\Delta_y)u(t,x,y)=0, \text{ on }\mathbb{R}_t\times \Omega,\\
(\pd_n+a(x,y)\pd_t+b(x,y))u(t,x,y)=0, \text{ on }\mathbb{R}_t\times\Gamma,\\
\label{1l4}
u(0,x,y)=u_0(x,y)\in H_\pd^2(\Omega),\ \pd_t u(0,x,y)=u_1(x,y)\in H_\pd^1(\Omega),
\end{gather}
where $H_\pd^s(\Omega)$ are the Sobolev spaces whose boundary condition on $X\times\pd Y$ is consistent with the chosen one on $\pd Y$. Define the energy of the solution by
\begin{equation}
E(u,t)=\frac12\int_\Omega \abs{\nabla u}^2+\abs{\pd_t u}^2. 
\end{equation}
\begin{theorem}[Transverse Geometric Control to Energy Decay]\label{0t1}
Assume $\{a_0(x)>0\}\subset \pd X$ geometrically controls $X$. Then there is $C>0$ such that
\begin{equation}\label{1l6}
E(u,t)^\frac12\le Ct^{-\frac1{2}}(\|u_0\|_{H^2(\Omega)}+\|u_1\|_{H^1(\Omega)}),
\end{equation}
either if $Y$ is compact, or if $0<c_0\le b/a_0$ on $\Gamma$. 
\end{theorem}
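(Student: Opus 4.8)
The plan is to reduce the energy decay estimate to a resolvent bound via the standard semigroup machinery, and then to obtain that resolvent bound by separating variables in the noncompact $Y$-direction and applying a geometric control estimate on the compact cross-section $X$. Concretely, I would write the problem as a first-order system $\pd_t U = \mathcal{A} U$ on the energy space $\mathcal{H} = H^1_\pd(\Omega) \times L^2(\Omega)$, where $\mathcal{A}$ encodes the wave operator together with the impedance boundary condition $(\pd_n + a\pd_t + b)u = 0$ on $\Gamma$. The operator $\mathcal{A}$ generates a contraction semigroup (the damping makes the energy nonincreasing), and by the theory of polynomial decay for semigroups (Borichev--Tomilov / Batty--Duyckaerts), a decay rate of $t^{-1/2}$ for the energy of $H^2\times H^1$ data is equivalent to a resolvent estimate of the form $\nor{(\mathcal{A}-i\lambda)^{-1}}_{\mathcal{H}\to\mathcal{H}} = \bigo(\lambda^2)$ as $\abs{\lambda}\to\infty$, together with the imaginary axis lying in the resolvent set. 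So the entire problem becomes: prove the stationary resolvent estimate with the correct power of $\lambda$.

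To establish the resolvent estimate I would exploit the product structure. The key point is that $\Delta_y \ge 0$ is essentially self-adjoint on $L^2(Y)$, so one can spectrally decompose in $y$: either via a genuine eigenbasis when $Y$ is compact, or via the spectral measure/direct integral of $\Delta_y$ in the noncompact case. Under this decomposition, the full resolvent equation $(\pd_t^2 + \Delta_x + \Delta_y)$ at spectral parameter $i\lambda$ becomes, on each $y$-mode with $\Delta_y$-eigenvalue (or spectral parameter) $\mu \ge 0$, a family of Helmholtz-type interior impedance problems on the cross-section $X$:
\begin{equation}
(\Delta_x - \lambda^2 + \mu)v = f \text{ on } X, \qquad (\pd_n + i\lambda a + b)v = g \text{ on } \pd X,
\end{equation}
parametrised by $\mu$. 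The transverse geometric control hypothesis — that $\{a_0 > 0\}$ geometrically controls $X$ — is exactly what feeds into a \emph{uniform-in-$\mu$} resolvent bound for this impedance family, because the effective interior frequency is $\sqrt{\lambda^2 - \mu}$ and the damping region $\{a_0>0\}$ controls all generalised geodesics on $X$. I would expect the paper to have isolated precisely such an impedance resolvent estimate (this is the ``general scheme'' advertised in the abstract), and I would invoke it here. Reassembling the modes and integrating/summing over $\mu$ then yields the global bound $\bigo(\lambda^2)$ on $\mathcal{H}$.

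The main obstacle is the noncompact, continuous-spectrum case, which is why the theorem splits into the two alternatives ``$Y$ compact'' or ``$0 < c_0 \le b/a_0$.'' When $Y$ is compact the decomposition is a discrete sum over eigenvalues $\mu_k \to \infty$, and one must only check that the per-mode impedance estimate is uniform in $k$, ruling out resonances accumulating on the imaginary axis; this is comparatively clean. When $Y$ is noncompact, $\Delta_y$ has continuous spectrum reaching down to $0$, so the transverse modes include the borderline regime $\mu \to \lambda^2$ where the effective cross-sectional frequency degenerates and the purely velocity-type damping $a\pd_t$ becomes weak. The hypothesis $0 < c_0 \le b/a_0$ supplies a genuine zeroth-order (potential) damping term $b$ that remains coercive in exactly this degenerate regime, closing the low-effective-frequency gap and guaranteeing the imaginary axis is free of spectrum uniformly. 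I therefore expect the heart of the argument to be verifying that the impedance resolvent estimate holds uniformly as $\mu \to \lambda^2$, with the $b$-term (or compactness of $Y$) providing the needed coercivity, and then confirming that the spectral integral over the direct-integral decomposition preserves the $\bigo(\lambda^2)$ bound rather than degrading it.
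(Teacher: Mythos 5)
Your high-level architecture --- first-order system on the energy space, Borichev--Tomilov, transverse spectral decomposition of $\Delta_y$, and an impedance resolvent estimate on the cross-section $X$ fed by geometric control --- is indeed the paper's skeleton. But two genuine gaps remain. First, the modewise reduction you write down is not available as stated: the damping coefficients $a(x,y)$, $b(x,y)$ depend on $y$, so the boundary condition $(\pd_n+a\pd_t+b)u=0$ does \emph{not} commute with the spectral decomposition in $y$, and there is no family of impedance problems on $X$ with boundary data $(\pd_n+i\lambda a+b)v=g$ to speak of. The paper first uses the dissipation identity $\cre\langle A(u,v),(u,v)\rangle_E=-\int_\Gamma a\abs{v}^2$ to show that quasimodes have small trace $\|\sqrt{a}u\|_\Gamma$, and only then replaces $(a,b)$ by the $y$-independent pair $(a_0,b_0)$ of Assumption \ref{1t3}, absorbing the difference $\lambda(a-a_0)u-i(b-b_0)u$ into an admissible right-hand side; this is where Assumption \ref{1t3} actually enters, and your proposal skips it entirely. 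Second, the cross-sectional estimate you ``expect the paper to have isolated'' and simply invoke is the heart of the proof, and it does not follow from geometric control alone at the frequencies you need. Geometric control, via the Bardos--Lebeau--Rauch exponential decay and a Fourier transform in time, gives the impedance bound only at high effective frequency (Lemma \ref{2t1}); but the decomposition forces you to handle \emph{all} effective frequencies $z=\lambda^2-\mu\in(-\infty,\lambda^2]$, and the passage through $z\approx 0$ is a separate, substantial argument: an overdamped problem (boundary coefficient $a_0\lambda$ large while the interior frequency vanishes) resolved by a compactness/contradiction analysis resting on a unique continuation principle (Lemma \ref{2t2}). That step cannot be waved through, and nothing in your outline supplies it.

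Relatedly, you misplace the role of the dichotomy ``$Y$ compact or $0<c_0\le b/a_0$.'' You attribute it to the degenerate regime $\mu\to\lambda^2$ in the transverse decomposition; but that regime occurs equally when $Y$ is compact (eigenvalues of $\Delta_y$ can lie arbitrarily close to $\lambda^2$), so compactness of $Y$ could not help there --- and in the paper that regime is handled by the unique continuation argument above under \emph{neither} extra hypothesis. The dichotomy is needed only at the bottom of the spectrum of the generator, $\lambda\to 0$, where the velocity damping $ia\lambda$ disappears: either $Y$ is compact, so $D(A)\hookrightarrow\mathcal{H}$ is compact by Rellich, the spectrum of $A$ is discrete, and $0$ is excluded as an eigenvalue; or the static damping $b\ge c_0a_0$ remains coercive at $\lambda=0$ and lets the estimates run down to $\lambda_0=0$. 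As written, your plan would stall both at the uniform cross-sectional estimate near zero effective frequency and at the zero-frequency analysis of the generator.
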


\begin{figure}
\begin{tikzpicture}[minimum size=0.01cm]
\draw [fill=gray!50] (-0.599,0.601) arc (45: 315: 0.85) -- (-0.670,-0.530) -- (-0.670, -0.530) arc (315: 45: 0.75) --cycle;
\draw (-1.2,0) circle (0.8);
\draw [dashed] (-1.2, 0.8) -- (0,0.8);
\draw [dashed] (-1.2, -0.8) -- (0,-0.8);
\node at (-1.2, 0) {$X$};


\fill[gray!25] (0, 0.8) arc (90:-90:-0.16 and 0.8) -- (3, -0.8) -- (3, -0.8) arc (-90:90:-0.16 and 0.8) -- (3,0.8) -- cycle;
\draw[fill=gray!50] (0, 0.8) arc (90:135:-0.16 and 0.8) -- (3.113, 0.566) -- (3.113, 0.566) arc (135:90:-0.16 and 0.8) -- (3,0.8) -- cycle;
\draw (0,0) ellipse (0.16 and 0.8);
\draw (0,0) ellipse (0.16 and 0.8);
\draw (0, -0.8) -- (3, -0.8);
\draw[fill=gray!50] (0, -0.8) arc (270:225:-0.16 and 0.8) -- (3.113, -0.566) -- (3.113, -0.566) arc (225:270:-0.16 and 0.8) -- (3,-0.8) -- cycle;

\draw (3, 0.8) arc (90:270:-0.16 and 0.8);
\draw [dashed] (3,0) ellipse (0.16 and 0.8);
\draw (0,0.8) -- (3,0.8); 


\node at (1.5,1) {$\Gamma$}; 

\draw [->] (0.5,-1) -- (2.5,-1);
\node at (1.5, -1.2) {$y$};
\end{tikzpicture}\hspace{3em}
\begin{tikzpicture}[minimum size=0.01cm]
\draw[fill=gray!50] (0, 0.75) -- (1.6, 0.75) -- (1.6, 0.85) -- (0, 0.85) -- cycle;
\draw (0, -0.8) -- (0, 0.8);
\draw (0, -0.8) -- (1.6, -0.8);
\draw (1.6, -0.8) -- (1.6, 0.8);
\draw (1.6, 0.8) -- (0, 0.8);
\node at (0.8, 1.05) {$\Gamma$};
\draw [->] (0.15,-1) -- (1.45,-1);
\node at (0.8, -1.2) {$y$};
\end{tikzpicture}\hspace{3em}
\begin{tikzpicture}[minimum size=0.01cm]
\draw [fill=gray!50](0, 0.6) -- (1.2, 0.6) -- (1.8,1.0) -- (0.6,1.0) -- cycle;
\draw (0, -0.6) -- (0, 0.6);
\draw (0, -0.6) -- (1.2, -0.6);
\draw (1.2, -0.6) -- (1.2, 0.6);
\draw (1.2, 0.6) -- (0, 0.6);
\draw (1.8,1.0) -- (1.8, -0.2) -- (1.2, -0.6);

\node at (0.8, 1.2) {$\Gamma$};
 \node at (0.8, -1.2) {};

\end{tikzpicture}
\caption{Three examples of $\Omega=X\times Y$ where Theorem \ref{0t1} applies, on $\mathbb{B}^2\times [0,1]$, $[0,1]\times \mathbb{S}^1$, $[0,1]\times\mathbb{T}^2$. The damping regions $\{a>0\}$ are shaded grey.} 
\label{f1}
\end{figure}
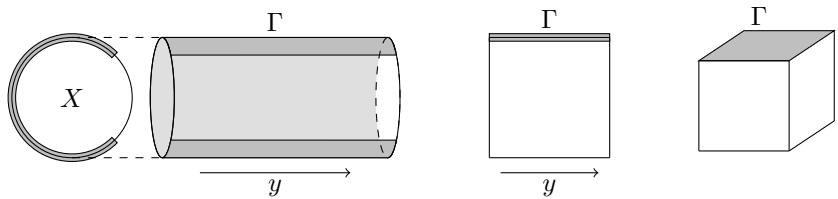

We give some examples in Figure \ref{f1}. One could also refer to Figure 4 to 6 in \cite{blr92} for more interesting choices of $X$ and $a_0(x)$, for example, Ikawa's bowling balls or dogbone regions. 

There have been some previous results on the decay of waves in cylindrical domains when the damping is uniformly positive on $\Gamma$: the $t^{-\delta}$-decay for some $\delta>0$ was shown in \cite{phu08}, and the $t^{-1/3}$-decay could be shown using the method in \cite{nis13}. In \cite{wan21} we showed the sharp $t^{-1/2}$-decay when $X$ is star-shaped. Theorem \ref{0t1} removes such star-shaped assumption. See Remark \ref{1t4}(3). This is also the only boundary damping result we know when $Y$ is non-compact, along the local energy decay obtained in \cite{roy18b}. 

When the damping could vanish on some part of $\Gamma$, the only result we know is an a priori $1/\log(2+t)$-decay, given by the Carleman estimates in \cite{lr97}. The $t^{-1/2}$-decay given in Theorem \ref{0t1} is more optimal on product manifolds. See also the references within \cite{wan21}. 

\begin{remark}\label{1t4}
\begin{enumerate}[wide]
\item The rate is sharp when $a_0>0$ on $\pd X$ and $a_0$ is sufficiently H\"{o}lder-regular. See Proposition \ref{4t2}.
\item The transverse geometric control is not necessary for the $t^{-1/2}$-rate. Let $X=[0,1]_x$, $Y=\mathbb{S}^1_y\times\mathbb{S}^1_z$ and $a(x)=1$. The damping region $\{a>0\}$ geometrically controls $[0,1]_x$ and hence one obtains $t^{-1/2}$-decay for waves on $X\times Y$: this is the third figure in Figure \ref{f1}. However, the problem is equivalent to that for $X'=[0,1]_x\times\mathbb{S}^1_z$, $Y'=\mathbb{S}^1_y$ and $a(x,z)=1$. In $X'$, the second figure of Figure \ref{f1}, the damping region $\{a>0\}=\{1\}_x\times \mathbb{S}^1_z$ does not geometrically control $X'\times Y'$, however the waves on $X'\times Y'=X\times Y$ enjoys the $t^{-1/2}$-decay. 
\item When $X$ is a bounded domain in $\mathbb{R}^d$ with smooth boundary, and $a_0(x)>0$ on $\pd X$, we have the geometric control condition, due to \cite[Lemma 5.3]{bsw16}.
\item The smoothness of $X$ could possibly be weakened to some manifolds with corners where the generalised bicharacteristic flows are well-defined, or $C^2$-metric with $C^3$-boundary: see \cite{bur97}. As long as we still have the spectral resolution of $\Delta_y$, $Y$ could be any manifold. See also Corollary \ref{1t2} for the case of $C^1$-boundary. 
\end{enumerate}
\end{remark}

We also obtain a general scheme to turn the resolvent estimates for interior impedance problems on the cross-section $X$ into energy decay rates of waves with boundary damping on $\Omega=X\times Y$:
\begin{theorem}[Transverse Resolvent Estimate to Energy Decay]\label{0t2}
Assume there exist $\delta\ge 0, \mu_0> 0$, $C>0$ such that uniformly for $\abs{\mu}\ge \mu_0$ and any $f\in L^2(X), g\in \sqrt{a_0}L^2(\pd X)$ and $u$ that
\begin{gather}\label{1l3}
(\Delta -\mu^2)u=f,\text{ on }X,\\
\label{1l5}
i\pd_n u+a_0\mu u=g,\text{ on }\pd X,
\end{gather}
we have
\begin{equation}\label{1l1}
\|u\|_{L^2(X)}^2+\langle \mu\rangle^{-2}\|\nabla u\|_{L^2(X)}^2\le C\langle \mu\rangle^{2\delta}\|f\|_{L^2(X)}^2+C\langle \mu\rangle^{-2+\delta}\|g/\sqrt{a_0}\|_{L^2(\pd X)}^2.
\end{equation}
Then there is $C>0$ such that for the solution $u$ to the equation \eqref{1l2} to \eqref{1l4}, 
\begin{equation}\label{1l7}
E(u,t)^\frac12\le Ct^{-\frac1{2+\delta}}(\|u_0\|_{H^2(\Omega)}+\|u_1\|_{H^1(\Omega)}),
\end{equation}
either if $Y$ is compact, or if $0<c_0\le b/a_0$ on $\Gamma$.
\end{theorem}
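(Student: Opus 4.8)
The plan is to pass through the resolvent characterisation of polynomial decay and to reduce the stationary problem on $\Omega$ to the family of impedance problems \eqref{1l3}--\eqref{1l5} on $X$ by a transverse spectral decomposition. First I would recast \eqref{1l2}--\eqref{1l4} as a first-order system $\pd_t U=\mathcal{A}U$ with $U=(u,\pd_t u)$ and $\mathcal{A}=\left(\begin{smallmatrix} 0 & \id \\ -\Delta & 0\end{smallmatrix}\right)$ on the energy space $\mathcal{H}=H^1_\pd(\Omega)\times L^2(\Omega)$, whose domain encodes $\pd_n u+a\pd_t u+bu=0$ on $\Gamma$ and whose inner product is the energy quadratic form augmented by the Robin term $\int_\Gamma b\abs{u}^2$. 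With this inner product Assumption \ref{1t3} gives $\cre\langle\mathcal{A}U,U\rangle_\mathcal{H}=-\int_\Gamma a\abs{\pd_t u}^2\le 0$, so $\mathcal{A}$ generates a bounded semigroup and $E(u,t)^{1/2}$ is comparable to $\nor{e^{t\mathcal{A}}U_0}_\mathcal{H}$. By the quantified Batty--Borichev--Tomilov resolvent criterion, the bound \eqref{1l7} with rate $t^{-1/(2+\delta)}$ follows once I prove (a) $i\setr\subset\rho(\mathcal{A})$ modulo the finite-dimensional space of $y$-independent steady states on which $E$ vanishes (nonempty only when $Y$ is compact with $b\equiv 0$), and (b) $\nor{(\mathcal{A}-i\lambda)^{-1}}_{\mathcal{H}\to\mathcal{H}}\le C\langle\lambda\rangle^{2+\delta}$ as $\abs{\lambda}\to\infty$, since on the complement of that kernel $\nor{U_0}_{\dom\mathcal{A}}\le C(\nor{u_0}_{H^2(\Omega)}+\nor{u_1}_{H^1(\Omega)})$.

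Solving $(\mathcal{A}-i\lambda)U=F$ eliminates the second component and turns (b) into the stationary estimate $\nor{u}_{L^2(\Omega)}+\langle\lambda\rangle^{-1}\nor{\nabla u}_{L^2(\Omega)}\le C\langle\lambda\rangle^{1+\delta}\nor{F}_\mathcal{H}$ for $(\Delta-\lambda^2)u=g$ on $\Omega$ with $\pd_n u+(i\lambda a+b)u=h$ on $\Gamma$, where $g=-(f_1+i\lambda f_0)$ and $h=-af_0$ obey $\nor{g}_{L^2(\Omega)}\le C\langle\lambda\rangle\nor{F}_\mathcal{H}$ and, by the trace theorem, $\nor{h/\sqrt{a_0}}_{L^2(\Gamma)}\le C\nor{F}_\mathcal{H}$. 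Next I would diagonalise the transverse Laplacian: writing $\Delta_y=\int\eta\,dE_\eta$ and projecting onto the spectral parameter $\eta\ge 0$, the component $u_\eta$ solves $(\Delta_x-\mu^2)u_\eta=g_\eta$ on $X$ with $\mu^2=\lambda^2-\eta$ and boundary condition $\pd_n u_\eta+(i\lambda a+b)u_\eta=h_\eta$. Up to complex conjugation and the lower-order term $bu_\eta$, which is supported in $\{a_0>0\}$ since $b\le a_0$, this is precisely the impedance problem \eqref{1l3}--\eqref{1l5} at frequency $\mu$; Assumption \ref{1t3} lets me trade $a$ for $a_0$ in the principal boundary symbol at the cost of constants.

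For $\mu$ imaginary, i.e. $\eta>\lambda^2$, the operator $\Delta_x+\abs{\mu}^2$ is coercive and the real part of Green's identity gives an estimate no worse than \eqref{1l1}; for real $\mu$ with $\abs{\mu}\ge\mu_0$ I apply the hypothesis \eqref{1l1} directly. Since $\langle\mu\rangle\le\langle\lambda\rangle$ throughout, the worst powers are realised at $\mu\sim\lambda$, and feeding the bounds on $g_\eta,h_\eta$ into \eqref{1l1} yields $\nor{u_\eta}_{L^2(X)}^2+\langle\lambda\rangle^{-2}\nor{\nabla u_\eta}_{L^2(X)}^2\le C\langle\lambda\rangle^{2+2\delta}\nor{F_\eta}_\mathcal{H}^2$, the boundary contribution $\langle\mu\rangle^{-2+\delta}\nor{h_\eta/\sqrt{a_0}}^2$ being of lower order. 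Summing in $\eta$ by Plancherel for $E_\eta$ recovers the stationary estimate of the preceding paragraph, hence (b).

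The \emph{essential difficulty} is the bounded window $\abs{\mu}<\mu_0$, that is $\eta$ within $\mu_0^2$ of $\lambda^2$, where \eqref{1l1} is unavailable and the weights $\langle\mu\rangle^{-2}$ degenerate; here I would argue by coercivity instead. Taking the real part of Green's identity for $(\Delta_x-\mu^2)u_\eta=g_\eta$ produces $\nor{\nabla u_\eta}_{L^2(X)}^2+\int_{\pd X}b\abs{u_\eta}^2=\mu^2\nor{u_\eta}_{L^2(X)}^2+\cre(\text{data})$. When $0<c_0\le b/a_0$ the boundary term dominates a trace--Poincar\'e inequality $\nor{u_\eta}_{L^2(X)}^2\le C(\nor{\nabla u_\eta}_{L^2(X)}^2+\int_{\pd X}a_0\abs{u_\eta}^2)$, so for $\mu_0$ small the term $\mu^2\nor{u_\eta}^2$ is absorbed and the mode is controlled uniformly, costing only bounded powers of $\langle\lambda\rangle$ after summation. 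When instead $Y$ is compact and $b$ may vanish, $\Delta_y$ has discrete spectrum and I would treat this window by a separate bounded-frequency analysis on $\Omega$ exploiting the transverse spectral gap, reducing a failure of uniformity to an eigenfunction-concentration statement that the velocity damping $a>0$ rules out; the same unique-continuation input establishes (a), as a nonzero $i\lambda$-eigenfunction would be undamped on $\Gamma$ and hence trivial. I expect this bounded-frequency step, together with the trace bookkeeping relating $\nor{h_\eta/\sqrt{a_0}}$ to $\nor{F}_\mathcal{H}$, to be the main obstacle; once \eqref{1l1} is in hand the high-frequency estimate and the power counting are routine.
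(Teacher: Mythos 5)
Your overall skeleton (semigroup reformulation, Borichev--Tomilov, transverse spectral decomposition, feeding the hypothesis \eqref{1l1} mode-by-mode) matches the paper's, but there is a genuine gap at exactly the point you declare routine. After projecting in $y$, the mode at transverse energy $\eta$ satisfies $(\Delta_x-\mu^2)u_\eta=g_\eta$ with $\mu^2=\lambda^2-\eta$, but the boundary condition it inherits is $i\pd_n u_\eta+a_0\lambda u_\eta+\dots=h_\eta$: the impedance coefficient carries the \emph{full} frequency $\lambda$, not the reduced frequency $\mu$. So this is not ``precisely the impedance problem \eqref{1l3}--\eqref{1l5} at frequency $\mu$'', and \eqref{1l1} cannot be applied directly: the discrepancy $(\lambda-\mu)a_0u_\eta$ is of size $\lambda a_0u_\eta$ when $\eta\approx\lambda^2$, and taming it requires the a priori trace bound on $\|\sqrt{a_0}u_\eta\|_{L^2(\pd X)}$ obtained from the imaginary part of Green's identity, followed by an absorption argument. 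This mismatched problem --- $(\Delta-z)u=f$, $i\pd_n u+a_0\lambda u+ib_0u=g$ with $-\infty<z\le\lambda^2$ --- is precisely the content of the paper's Section 2, and it is where the compactness and unique-continuation arguments (Lemma \ref{2t2} and Steps 2a--2c of the proposition there) live. Your substitutes do not cover it: the trace--Poincar\'e absorption requires $\mu_0$ \emph{small}, whereas $\mu_0$ is fixed (possibly large) by hypothesis, leaving the window $c\le\abs{\mu}\le\mu_0$ untreated; and the ``transverse spectral gap'' for compact $Y$ does not exist uniformly in $\lambda$ --- eigenvalues of $\Delta_y$ land within $\mu_0^2$ of $\lambda^2$ for arbitrarily large $\lambda$ (already for $Y=\mathbb{S}^1$), so the near-glancing window cannot be avoided. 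You have conflated this window (present for all large $\lambda$, handled by unique continuation on $X$, requiring only $a_0\not\equiv0$) with the genuinely separate issue of $\lambda\to0$ for the generator, which is the only place where the dichotomy ``$Y$ compact or $b\ge c_0a_0$'' actually enters.

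The mismatch also breaks your power counting and your claim that ``the worst powers are realised at $\mu\sim\lambda$''. In the near-glancing regime the correction $(\lambda-\mu)a_0u_\eta$ forces an interior-data constant of order $\langle\lambda\rangle^{2}$ rather than $\langle\mu\rangle^{2\delta}$; this is why the paper's product-manifold estimate \eqref{3l3} carries $\langle\lambda\rangle^{2+2\delta}\|f\|_{L^2}^2$ and not $\langle\lambda\rangle^{2\delta}\|f\|_{L^2}^2$. Since your source $g=-(f_1+i\lambda f_0)$ is only bounded by $\langle\lambda\rangle\|F\|_{\mathcal{H}}$ in $L^2$, this yields $\|u\|_{L^2(\Omega)}\le C\langle\lambda\rangle^{2+\delta}\|F\|_{\mathcal{H}}$, hence a resolvent bound $\langle\lambda\rangle^{3+\delta}$ and only $t^{-1/(3+\delta)}$ decay. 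The paper recovers the sharp power by splitting the data: the genuinely $L^2$-small part goes through \eqref{3l3}, while the large part $i\lambda f_0$ is $H^1$-regular and goes through a second resolvent estimate \eqref{3l9} for $H^1$ data, proved by commuting $\Lambda=(1+\Delta_y)^{1/2}$ through the equation (possible only because $a_0=a_0(x)$ is $y$-independent). Relatedly, your trade of $a(x,y)$ for $a_0(x)$ ``at the cost of constants'' is not available: the spectral projectors do not commute with multiplication by $a(x,y)$, so the trade must be made before decomposing, and the resulting boundary error $\lambda(a-a_0)u$ is controlled not by constants but by the quantitative dissipation identity $\|\sqrt{a}\,v\|_{L^2(\Gamma)}^2=-\cre\langle(A+i\lambda)U,U\rangle_E$, which must come out exactly as $\sqrt{a_0}\,o(\langle\lambda\rangle^{-1-\delta/2})$ for the boundary term in \eqref{1l1} to close. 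These devices --- the down-through-zero-frequency estimate, the $L^2$/$H^1$ data splitting, and the dissipation-based transfer from $a(x,y)$ to $a_0(x)$ --- are the core of the paper's proof, and the proposal as written does not contain them.
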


\begin{figure}
\begin{tikzpicture}[minimum size=0.01cm]
\draw [fill=gray!50](0, 0.85) arc (90:270: 0.85) -- (0.1, -0.85) -- (0.1, -0.75) -- (0, -0.75) arc (270: 90: 0.75) -- (0.1, 0.75) -- (0.1, 0.85) -- cycle;

\draw [fill=gray!50] (2, 0.85) arc (90:-90: 0.85) -- (1.9, -0.85) -- (1.9, -0.75) -- (2, -0.75) arc (-90: 90: 0.75) -- (1.9, 0.75) -- (1.9, 0.85) -- cycle;

\draw (0, -0.8) -- (2, -0.8);
\draw (0, 0.8) -- (2, 0.8);
\draw [dashed] (0, -0.8) -- (0, 0.8);
\draw [dashed] (2, -0.8) -- (2, 0.8);
\draw (2, -0.8) arc (-90:90: 0.8);
\draw (0, 0.8) arc (90:270: 0.8);
\node at (1, 0) {$R$};
\node at (-0.4, 0) {$W$};
\node at (2.4, 0) {$W$};

\end{tikzpicture}\hspace{5em}
\begin{tikzpicture}[minimum size=0.01cm]
  \draw[domain=0:1, variable=\x, samples=200] plot ({\x}, {0.7*(sin(deg(pi*\x))/pi+sin(deg(3*pi*\x))/(6*pi)+sin(deg(9*pi*\x))/(36*pi)+sin(deg(27*pi*\x))/(216*pi)+sin(deg(81*pi*\x))/(1296*pi))});
  \draw plot [smooth] coordinates {(0.99,0.012) (1.01, -29/1600) (1.1, -0.2) (1.15 , -0.5) (1.1, -0.8) (1.01, -1+29/1600) (0.99, -1.012)};
  \draw[domain=0:1, variable=\x, samples=200] plot ({\x}, {-1-0.7*((sin(deg(pi*\x))/pi+sin(deg(3*pi*\x))/(6*pi)+sin(deg(9*pi*\x))/(36*pi)+sin(deg(27*pi*\x))/(216*pi)+sin(deg(81*pi*\x))/(1296*pi)))});
  \draw plot [smooth] coordinates {(0.01,0.012) (-0.01, -29/1600) (-0.1, -0.2) (-0.15 , -0.5) (-0.1, -0.8) (-0.01, -1+29/1600) (0.01, -1.012)};
  
  \draw [dashed] (0.5,-0.5) circle (0.8);

  \node at (0.5,-0.5) {$X$};
  \node at (1.8, -0.46) {$X^\sharp$};
\end{tikzpicture}
\caption{Two examples of $X$ when there is no geometric control: the Bunimovich stadium on the left, and a domain with $C^1$-boundary on the right.} 
\label{f2}
\end{figure}
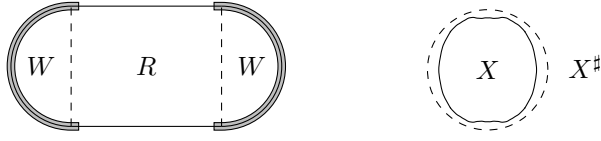
We present some immediate corollaries in cases where we do not have the geometric control condition on the cross-section $X$. Recall the Bunimovich stadium is a planar domain which is a union of a rectangle $R$ with two wings $W$: see Figure \ref{f2}, and \cite{bhw07,nis13}. We consider some damping $a_0(x)>0$ on $\overline{\pd W}$ which vanishes on most of $\pd R$. The geometric control is violated on the Bunimovich stadium $X$, however we have obtained polynomial decay rates. 
\begin{corollary}\label{1t1}
Let $X=R\cup W$ be the Bunimovich stadium, and $a_0(x)>0$ on $\overline{\pd W}$. Then we have 
\begin{equation}\label{1l8}
E(u,t)^\frac12\le Ct^{-\frac1{4}}(\|u_0\|_{H^2(\Omega)}+\|u_1\|_{H^1(\Omega)}),
\end{equation}
for the equation \eqref{1l2} to \eqref{1l4}, where $\Omega=X\times Y$, either if $Y$ is compact, or if $0<c_0\le b/a_0$ on $\Gamma$.
\end{corollary}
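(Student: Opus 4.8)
The plan is to deduce the corollary from Theorem \ref{0t2}: it suffices to verify the interior impedance estimate \eqref{1l1} on the stadium cross-section $X$ with loss exponent $\delta=2$, for then $1/(2+\delta)=1/4$ and \eqref{1l8} follows at once. I would first record why Theorem \ref{0t1} is unavailable: the vertical bouncing-ball geodesics of the rectangle $R$ reflect only off the flat top and bottom of $\pd R$, where $a_0$ vanishes, so they never meet $\{a_0>0\}\subset\overline{\pd W}$, and the geometric control condition fails. Hence $\delta=0$ is hopeless, and the whole task is to show that the loss is no worse than $\delta=2$.

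To establish \eqref{1l1} with $\delta=2$ I would argue by contradiction with semiclassical defect measures. Writing $h=\langle\mu\rangle^{-1}$ and rescaling, \eqref{1l3} becomes $(h^2\Delta-1)u=h^2 f$ and \eqref{1l5} becomes $ih\pd_n u+a_0 u=hg$; a failure of \eqref{1l1} produces normalised $u_k$ with $\|h^2 f_k\|_{L^2(X)}=o(h^4)$ and $hg_k\to 0$, so both forcings are negligible and the associated defect measure $\nu$ on $S^*X$ is invariant under the billiard flow. Away from the bouncing-ball set $\mathcal B$, every generalised bicharacteristic reaches $\{a_0>0\}$, where the impedance condition \eqref{1l5} absorbs $\nu$ by Melrose--Sj\"{o}strand propagation up to the boundary; this forces $\nu=0$ there with no loss. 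Near $\mathcal B$ I would separate variables in $R$, freeze the transverse bouncing quantum number, and reduce to a one-dimensional Helmholtz problem in the longitudinal variable carrying an effective damping induced only at the junctions where $R$ meets $W$; the algebraically small leakage of a bouncing-ball profile into $W$ is precisely what forces the $h^{-2}$ blow-up, i.e. $\delta=2$. Alternatively one can quote the stadium resolvent estimate of \cite{bhw07} and transfer it from interior to boundary impedance damping through a Rellich-type multiplier identity.

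The main obstacle is the analysis near $\mathcal B$. That no bound with $\delta<2$ can hold is forced by the bouncing-ball quasimodes, which concentrate in $R$ and deposit vanishing energy on the damped boundary $\{a_0>0\}$; the real work is the matching upper bound, namely quantifying how much such a mode leaks across the merely $C^1$ junctions between $R$ and $W$ and propagating $\nu$ through those corners under the impedance condition \eqref{1l5} rather than a Dirichlet or Neumann one. Once \eqref{1l1} holds with $\delta=2$, Theorem \ref{0t2} applies verbatim and yields \eqref{1l8}.
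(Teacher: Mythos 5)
Your top-level reduction is exactly the paper's: the corollary follows from Theorem \ref{0t2} once the impedance resolvent estimate \eqref{1l1} holds on the stadium cross-section with $\delta=2$, since $1/(2+\delta)=1/4$ gives \eqref{1l8}. The gap is that you never actually establish that estimate. The paper's proof is essentially a citation: \cite[Proposition 6]{nis13} already proves, for the boundary impedance problem \eqref{1l3}--\eqref{1l5} on the stadium, the bound $\|u\|_{L^2(X)}^2\le C\mu^2\|f\|_{L^2(X)}^2+C\|g\|_{L^2(\pd X)}^2$, which is \eqref{1l1} with $\delta=2$; Theorem \ref{0t2} then finishes the proof. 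Your proposal instead sketches a re-proof of that estimate and explicitly defers its hardest step (``the real work is the matching upper bound\dots''), but that step \emph{is} the entire content of the corollary beyond the black box of Theorem \ref{0t2}; as written, this is a plan rather than a proof.

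Two specific points in the plan are also shaky. First, the appeal to Melrose--Sj\"{o}strand propagation ``up to the boundary'' away from the bouncing-ball set is not justified here: the stadium boundary fails to be smooth at the four junctions where the arcs meet the straight segments (it is only $C^{1,1}$, with a curvature jump), and propagation of singularities for boundary value problems requires more regularity precisely where your flow-invariance argument must push mass through those junctions into the wings. This is why the arguments in \cite{bhw07,nis13} do not propagate through the junctions at all, but instead exploit the product structure of $R$ (Fourier decomposition in the transverse variable and one-dimensional estimates, with control entering from the wing region). Second, your fallback --- quoting \cite{bhw07} and transferring it to boundary damping ``through a Rellich-type multiplier identity'' --- understates the difficulty: \cite{bhw07} concerns interior damping, and converting such an estimate into one for the impedance condition \eqref{1l5} with damping only on $\overline{\pd W}$ is not a routine multiplier computation; that conversion is essentially the contribution of \cite{nis13}. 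The efficient repair of your proof is therefore to replace the sketch by the citation of \cite[Proposition 6]{nis13} for \eqref{1l1} with $\delta=2$ and then invoke Theorem \ref{0t2}, which is exactly what the paper does.
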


Another case we consider is when the boundary of $X$ is merely $C^1$, rough enough to prohibit generalised bicharacteristics from being well-defined. See Figure \ref{f2} and \cite{cv02}. 
\begin{corollary}\label{1t2}
Let $X^\sharp\subset \mathbb{R}^d$ be a smooth bounded connected domain with a smooth boundary $\pd X^\sharp$, equipped with a smooth metric. Let $X\subset X^\sharp$ be a connected domain of the same dimension, with a $C^1$-boundary $\pd X$ and $\pd X\cap \pd X^\sharp=\emptyset$. Assume there is $T>0$ such that every geodesic in $X^\sharp$ hits $\pd X^\sharp$, not necessarily non-diffractively, within time $T$. Let $a_0(x)>0$ on $\pd X$ and we have 
\begin{equation}\label{1l9}
E(u,t)^\frac12\le Ct^{-\frac1{3}}(\|u_0\|_{H^2(\Omega)}+\|u_1\|_{H^1(\Omega)}),
\end{equation}
for the equation \eqref{1l2} to \eqref{1l4}, where $\Omega=X\times Y$, either if $Y$ is compact, or if $0<c_0\le b/a_0$ on $\Gamma$.
\end{corollary}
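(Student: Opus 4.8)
The plan is to obtain this as a special case of Theorem \ref{0t2} with the loss parameter $\delta=1$. Indeed, for $\delta=1$ the conclusion \eqref{1l7} reads $E(u,t)^{1/2}\le Ct^{-1/3}(\|u_0\|_{H^2(\Omega)}+\|u_1\|_{H^1(\Omega)})$, which is precisely \eqref{1l9}, and the two alternatives imposed on $Y$ coincide verbatim with those of Theorem \ref{0t2}. So everything reduces to establishing the interior impedance resolvent estimate \eqref{1l1} on the $C^1$-domain $X$ with $\delta=1$: for $\abs{\mu}\ge\mu_0$ and $u$ solving \eqref{1l3}--\eqref{1l5},
\begin{equation*}
\|u\|_{L^2(X)}^2+\langle\mu\rangle^{-2}\|\nabla u\|_{L^2(X)}^2\le C\langle\mu\rangle^{2}\|f\|_{L^2(X)}^2+C\langle\mu\rangle^{-1}\|g/\sqrt{a_0}\|_{L^2(\pd X)}^2.
\end{equation*}

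I would split this estimate into a boundary part and an interior part. The boundary part is elementary: pairing \eqref{1l3} with $\bar u$, integrating by parts on $X$, and substituting the impedance condition \eqref{1l5} in the form $\pd_n u=ia_0\mu u-ig$ gives a Green's identity whose imaginary part controls the dissipated boundary mass $\abs{\mu}\,\|\sqrt{a_0}\,u\|_{L^2(\pd X)}^2$ and whose real part controls the energy defect $\|\nabla u\|_{L^2(X)}^2-\abs{\mu}^2\|u\|_{L^2(X)}^2$, both modulo $\|f\|\,\|u\|$ and the boundary pairing $\int_{\pd X}g\bar u$. By Cauchy--Schwarz the latter is absorbed into the dissipation at the cost of a factor $\langle\mu\rangle^{-1}$, which is exactly the weight on $\|g/\sqrt{a_0}\|^2$ in \eqref{1l1}; and once the interior norm $\|u\|_{L^2(X)}$ is controlled, the real part delivers the gradient term $\langle\mu\rangle^{-2}\|\nabla u\|^2$. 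Thus the whole difficulty is the interior $L^2$-bound on $u$, which is where the geometry must enter.

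For the interior bound I would invoke the ambient domain $X^\sharp$ together with \cite{cv02}. Because $\pd X$ is only $C^1$, the generalised bicharacteristic flow on $X$ is not defined and one cannot run propagation of singularities or the \cite{blr92} argument on $X$ itself. Instead $X$ sits inside the smooth domain $X^\sharp$, whose metric is smooth across the interface $\pd X$ (so the ambient geodesics pass through $\pd X$ and see only $\pd X^\sharp$) and all of whose geodesics reach $\pd X^\sharp$ within time $T$; this non-trapping of $X^\sharp$ is exactly the geometric hypothesis that \cite{cv02} turns into a high-frequency resolvent bound on the rough subdomain $X$. The mechanism is that the sharp boundary-propagation estimate available in the smooth case is replaced, near the $C^1$ surface $\pd X$, by an elliptic/energy estimate, and this substitution costs a single power of $\langle\mu\rangle$. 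This is the origin of $\delta=1$, hence of the $t^{-1/3}$-rate rather than the optimal $t^{-1/2}$.

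I expect the transfer step to be the main obstacle: converting the non-trapping estimate on the smooth $X^\sharp$ into a quantitative interior bound $\|u\|_{L^2(X)}^2\le C\langle\mu\rangle^{2}\|f\|_{L^2(X)}^2+\cdots$ for the impedance problem on $X$, while controlling the trace of $u$ across the merely $C^1$ boundary and keeping the loss to one power of $\langle\mu\rangle$. Once this interior bound is in hand, I would feed it back into the Green's identity of the second paragraph to absorb the remaining factors of $\|u\|$ and close \eqref{1l1} with $\delta=1$, after which Theorem \ref{0t2} yields \eqref{1l9} directly.
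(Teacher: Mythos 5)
Your proposal follows essentially the same route as the paper: the paper likewise reduces Corollary \ref{1t2} to Theorem \ref{0t2} with $\delta=1$, obtaining the required resolvent estimate \eqref{1l1} on the $C^1$ domain $X$ by citing \cite[Proposition 2.1]{cv02}. The ``transfer step'' you flag as the main remaining obstacle is precisely the content of that cited proposition --- it already provides the high-frequency bound for the impedance problem on the rough subdomain $X$ under the ambient non-trapping hypothesis on $X^\sharp$ --- so beyond invoking it (plus your Green's identity bookkeeping for the gradient and boundary-weighted terms, which the paper leaves implicit), no further work is needed.
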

\begin{remark}
We can use the $D(\Delta)$-norm defined in Section 4, instead $H^2$-norm of $u_0$ in \eqref{1l6}, \eqref{1l7}, \eqref{1l8}, \eqref {1l9}. When $a, b\in C^{0,1/2+\delta}(\Gamma)$ for some $\delta>0$, those two norms are equivalent. 
\end{remark}

The paper is organised in the following manner: in Section 2 we deal with the Geometric Control Condition and resolvent estimates on $X$; in Section 3 we use the resolvent estimates on $X$ to derive those on $\Omega=X\times Y$; in Section 4 we set up the evolution problem and finish the proofs of the results. 

\subsection{Acknowledgement}
The author is grateful to Jared Wunsch for numerous discussions around these results as well as many valuable comments on the manuscript. The author is grateful to Mikko Salo for kindly pointing out the reference for a unique continuation result used in Lemma \ref{2t2}. 

\section{Transverse Resolvent Down Through Zero Frequency}
In this section, the inner products are defaulted to $L^2(X)$ or the $L^2$-space over the indicated space. We firstly show that the geometric control condition gives an high frequency estimate. 
\begin{lemma}(Geometric Control to High Frequency Estimate)\label{2t1}
Assume $\{a_0(x)>0\}$ geometrically controls $X$. Then there exists $C, \mu_0>0$ such that uniformly for any $\abs{\mu}>\mu_0$, $f\in L^2(X)$, $g\in \sqrt{a_0}L^2(\pd X)$, $u$ that 
\begin{gather}\label{2l5}
(\Delta -\mu^2)u=f,\text{ on }X,\\
i\pd_n u+a_0\mu u=g,\text{ on }\pd X,
\end{gather}
we have
\begin{equation}
\|u\|_{L^2(X)}^2+\langle \mu\rangle^{-2}\|\nabla u\|_{L^2(X)}\le C\langle \mu\rangle^{-2}\|f\|_{L^2(X)}^2+C\langle \mu\rangle^{-2}\|g/\sqrt{a_0}\|_{L^2(\pd X)}^2.
\end{equation}
In other words, we have the estimate $\eqref{1l1}$ with $\delta=0$ for the equation \eqref{1l3} to \eqref{1l5}. 
\end{lemma}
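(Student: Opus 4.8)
The plan is to argue by contradiction, extracting a semiclassical defect measure and letting the geometric control hypothesis enter through propagation of singularities up to the boundary. Suppose the estimate fails. Then there are $\mu_k$ (real, with $\abs{\mu_k}\to\infty$) and $u_k,f_k,g_k$ solving \eqref{2l5} with the normalisation $\|u_k\|_{L^2(X)}^2+\langle\mu_k\rangle^{-2}\|\nabla u_k\|_{L^2(X)}^2=1$, while $\langle\mu_k\rangle^{-2}(\|f_k\|_{L^2(X)}^2+\|g_k/\sqrt{a_0}\|_{L^2(\pd X)}^2)\to 0$. Setting $h=\langle\mu_k\rangle^{-1}$ so that $h\mu_k\to\pm1$, the problem becomes the semiclassical boundary value problem $(h^2\Delta-(h\mu_k)^2)u_k=h^2 f_k$ on $X$ with $ih\pd_n u_k+a_0(h\mu_k)u_k=hg_k$ on $\pd X$, and $u_k$ is bounded in $L^2(X)$ and semiclassically in $H^1$.

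First I would record two energy identities obtained by pairing the equation with $\bar u_k$, using Green's formula and the boundary condition. The imaginary part gives $\mu_k\int_{\pd X}a_0\abs{u_k}^2=\cre\int_{\pd X}g_k\bar u_k-\cim\int_X f_k\bar u_k$; a short bootstrap with Cauchy--Schwarz and the smallness of the data shows $\int_{\pd X}a_0\abs{u_k}^2\to 0$, so the boundary trace of $u_k$ is annihilated on the damping region. The real part gives $\|\nabla u_k\|^2-\mu_k^2\|u_k\|^2=o(\langle\mu_k\rangle)$, hence $h^2\|\nabla u_k\|^2-\|u_k\|^2\to 0$; together with the normalisation this forces $\|u_k\|_{L^2(X)}^2\to \tfrac12$, so mass is not lost and the limiting measure will be nontrivial.

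Passing to a subsequence, I would associate to $u_k$ a semiclassical defect measure $\nu$ on $T^*X$, using the boundary value theory for such measures. Since $h^2f_k\to 0$ and $h\mu_k\to\pm1$, $\nu$ is supported on the characteristic set $\{\abs{\xi}=1\}$, and by elliptic regularity no mass escapes to the elliptic region $\abs{\xi}\neq1$ (in particular to $\xi=0$ or $\xi=\infty$, using the semiclassical $H^1$ bound); hence $\nu(T^*X)=\lim\|u_k\|^2=\tfrac12$. By the Melrose--Sjöstrand propagation theorem up to the boundary, $\nu$ is invariant under the generalised geodesic flow, the reflections being the admissible ones since the geometric control hypothesis concerns non-diffractive geodesics. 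The dissipative boundary condition together with the vanishing of the damped boundary trace established above forces $\nu$ to vanish over a neighbourhood of $\{a_0>0\}$: reflection there is strictly sub-unitary, so no bicharacteristic carrying mass can meet the damping region.

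Finally, since every generalised geodesic meets $\{a_0>0\}$ within time $T$ by the geometric control hypothesis, invariance of $\nu$ propagates the vanishing along every ray, giving $\nu\equiv0$ and contradicting $\nu(T^*X)=\tfrac12$; as $\delta=0$ this is exactly \eqref{1l1}. The main obstacle is entirely at the boundary: making rigorous the defect-measure calculus for the impedance problem, controlling glancing and grazing directions so that $\nu$ is genuinely flow-invariant across reflections (where the non-diffractive hypothesis is essential), and verifying that the dissipative condition annihilates rather than merely damps the measure on the controlled region. The interior propagation and the elliptic estimates away from $\{\abs{\xi}=1\}$ are, by comparison, routine.
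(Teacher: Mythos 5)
Your strategy is sound and, in outline, would prove the lemma, but it is genuinely different from the paper's proof. You argue by contradiction and run the semiclassical defect-measure machinery directly on the stationary impedance problem: normalise the quasimodes, use the imaginary-part pairing to show $\|\sqrt{a_0}\,u_k\|_{L^2(\pd X)}\to 0$ (hence, via the boundary condition, the semiclassical Neumann trace vanishes as well), extract a measure carried by $\{\abs{\xi}_g=1\}$ with total mass $\tfrac12$, propagate it along generalised bicharacteristics, annihilate it over $\{a_0>0\}$, and invoke geometric control. The paper does none of this: following \cite[Theorem 1.8]{bsw16}, it solves the time-dependent boundary-damped wave equation with forcing $e^{-i\mu t}\phi(t)f$ in the interior and $e^{-i\mu t}\phi(t)g$ in the boundary condition $(\pd_n+a_0\pd_t)v_\mu=G$, runs a Gr\"onwall estimate on $[0,1]$, cites the Bardos--Lebeau--Rauch exponential decay theorem \cite[Theorem 5.5]{blr92} (stated exactly for this damped boundary condition under the non-diffractive geometric control condition) for $t>1$, and then Fourier transforms in time and evaluates at $\tau=\mu$, using $\hat\phi(0)=\int\phi=1$, to convert uniform-in-$\mu$ exponential decay into the uniform resolvent bound. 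What the paper's route buys is brevity and rigour by citation: all the microlocal difficulty --- glancing points, diffractive versus gliding tangencies, the dissipative boundary condition --- is encapsulated in the quoted theorem, and uniformity in $\mu$ comes for free. What your route buys is a stationary, in-principle self-contained argument that never mentions the evolution problem; but the steps you set aside as ``the main obstacle'' are not peripheral technicalities: the defect-measure calculus up to the boundary for an impedance/Neumann problem, invariance through the glancing set, and the annihilation lemma at the damping region are precisely the content of the theorems of Bardos--Lebeau--Rauch, Lebeau and Burq, so making them rigorous amounts to re-proving the black box that the paper invokes.

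Two points deserve correction if you pursue your route. First, on $\{a_0=0\}$ the datum $g$ vanishes identically (since $g\in\sqrt{a_0}L^2(\pd X)$), so the boundary condition there is exactly Neumann; whispering-gallery concentration at such boundary points is then possible, so the measure must be constructed on the compressed (boundary) phase space in the Lebeau--Burq sense rather than on $T^*X$ --- only in that framework is your claim that no mass is lost, $\nu(\text{total})=\lim\|u_k\|^2=\tfrac12$, justified. Second, your mechanism at the damping region is misstated: ``strictly sub-unitary reflection'' by itself only gives a loss of mass per bounce, which does not immediately force vanishing. The correct, stronger statement --- which your own a priori identities supply --- is that both the Dirichlet trace on $\{a_0>0\}$ and the semiclassical Neumann trace tend to zero in $L^2$, so the boundary flux vanishes and the incoming and outgoing measures at hyperbolic (and, with more work, non-diffractive tangential) points over $\{a_0>0\}$ are both zero; invariance plus geometric control then kills $\nu$ as you conclude.
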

\begin{proof}
The idea of the proof follows \cite[Theorem 1.8]{bsw16}. Consider $v_\mu$ being the unique solution to 
\begin{gather}\label{2l4}
(\pd_t^2+\Delta_{x})v_\mu=e^{-i\mu t}\phi(t) f=F,\text{ on } X,\\
(\pd_n+a_0\pd_t)v_\mu=e^{-i\mu t}\phi(t) g=G,\text{ on } \pd X,\\
v_\mu(t,x)=0, \ t<0,
\end{gather}
where $\phi$ is smooth and compactly supported in $(0,1)$ with $\int\phi=1$. Pair \eqref{2l4} with $\pd_t v_\mu$ in $L^2(X)$ to see 
\begin{multline}
\pd_t(\|\nabla v_\mu\|_X^2+\|\pd_t v_\mu\|_{X}^2)=-2\|\sqrt{a_0}\pd_t v_\mu\|_{\pd X}^2\\
+\cre{(\langle G,\pd_t v_\mu\rangle_{\pd X}+\langle F, \pd_t v_\mu\rangle_X)}\le C\|F\|^2_X+C\|G/\sqrt{a_0}\|_{\pd X}^2+C\|\pd_t v_\mu\|_X^2.
\end{multline}
Apply the Gr\"{o}nwall's inequality to see, at $t=1$, 
\begin{equation}
\|\nabla v_\mu\|_X^2+\|\pd_t v_\mu\|_{X}^2\le C\|F\|_{[0,1]\times X}^2+C\|G/\sqrt{a_0}\|_{[0,1]\times \pd X}^2. 
\end{equation}
Since $\{a_0(x)>0\}$ geometrically controls $X$, by \cite[Theorem 5.5]{blr92} we have the exponential decay for all $t>1$:
\begin{equation}\label{2l11}
\|\nabla v_\mu\|_X^2+\|\pd_t v_\mu\|_{X}^2\le Ce^{-ct}(\|f\|_{X}^2+\|g/\sqrt{a_0}\|_{\pd X}^2),
\end{equation}
where the constants do not depend on $\mu$. Inverse Fourier transform $t$ to $\tau$ to observe
\begin{gather}
(\Delta-\tau^2)\mathcal{F}^{-1}v_\mu=\hat{\phi}(\mu-\tau)f,\\
(i\pd_n+a_0 \tau)\mathcal{F}^{-1}v_\mu=i\hat{\phi}(\mu-\tau) g.
\end{gather}
Use $\|\mathcal{F}^{-1}v_\mu\|_{L^2(X)}\le \|v_\mu\|_{L^2(\mathbb{R}_t, L^2(X))}$ to deduce from \eqref{2l11}
\begin{equation}
\abs{\tau}\|\mathcal{F}^{-1}v_\mu\|_X+\|\nabla \mathcal{F}^{-1}v_\mu\|_X\le C(\|f\|+\|g/\sqrt{a_0}\|).
\end{equation}
for all $\tau\in\mathbb{R}$. Now let $\tau=\mu$ to see the solutions to \eqref{2l5} $u_\mu=\mathcal{F}^{-1}v_\mu$, and the desired estimate for $\abs{\mu}\ge \mu_0>0$ uniformly. 
\end{proof}
We here use the unique continuation principles to fix the low frequencies. 
\begin{lemma}[All Frequency Estimate]\label{2t2}
Suppose $a_0\not\equiv 0$ on $\pd X$, and for the equation \eqref{1l3} to \eqref{1l5}, we have the estimate \eqref{1l1} with $\delta\ge 0$, uniformly for all $\abs{\mu}\ge \mu_0>0$. Then for another equation with $f\in L^2(X)$, $g\in \sqrt{a_0}L^2(\pd X)$, 
\begin{gather}\label{2l1}
(\Delta -\mu^2)u=f,\text{ on }X,\\
\label{2l3}
i\pd_n u+a_0\mu u+ia_0 u=g,\text{ on }\pd X,
\end{gather}
uniformly for all $\mu\in \mathbb{R}$ we have
\begin{equation}\label{2l2}
\|u\|_{L^2(X)}^2+\langle \mu\rangle^{-2}\|\nabla u\|_{L^2(X)}\le C\langle \mu\rangle^{2\delta}\|f\|_{L^2(X)}^2+C\langle \mu\rangle^{-2+\delta}\|g/\sqrt{a_0}\|_{L^2(\pd X)}^2.
\end{equation}
Note the perturbative term $ia_0 u$ added to the boundary that makes \eqref{2l1} different from \eqref{1l3}. 
\end{lemma}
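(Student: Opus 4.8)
The plan is to split the estimate into the high frequencies $\abs{\mu}\ge\mu_0$, handled by perturbing the hypothesised estimate \eqref{1l1}, and the low frequencies $\abs{\mu}\le\mu_0$, handled by a compactness-and-unique-continuation argument. The perturbative term $ia_0u$ is harmless at high frequency, but is precisely what restores injectivity at $\mu=0$, where the unperturbed boundary condition \eqref{1l5} degenerates to a Neumann-type condition with constants in its kernel.

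For $\abs{\mu}\ge\mu_0$ I would rewrite the boundary condition \eqref{2l3} as the unperturbed one \eqref{1l5} with modified data $\tilde g=g-ia_0u$, so $\tilde g/\sqrt{a_0}=g/\sqrt{a_0}-i\sqrt{a_0}u$, and apply the hypothesis \eqref{1l1} to $(f,\tilde g)$. This gives
\begin{equation*}
\|u\|_X^2+\langle\mu\rangle^{-2}\|\nabla u\|_X^2\le C\langle\mu\rangle^{2\delta}\|f\|_X^2+C\langle\mu\rangle^{-2+\delta}\|g/\sqrt{a_0}\|_{\pd X}^2+C\langle\mu\rangle^{-2+\delta}\|\sqrt{a_0}u\|_{\pd X}^2,
\end{equation*}
so everything reduces to absorbing the last term. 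The naive trace bound $\|\sqrt{a_0}u\|_{\pd X}^2\le C\langle\mu\rangle\|u\|_X^2+C\langle\mu\rangle^{-1}\|\nabla u\|_X^2$ only closes for $\delta<1$; instead I would exploit the damping. Pairing \eqref{2l1} with $u$, integrating by parts, and substituting \eqref{2l3} for $\pd_nu$, the imaginary part collapses to the identity $\mu\|\sqrt{a_0}u\|_{\pd X}^2=\cre\langle g,u\rangle_{\pd X}-\cim\langle f,u\rangle_X$, in which the perturbation $ia_0u$ does not affect the coefficient $\mu$ of the boundary term. One Young inequality then yields the genuine gain
\begin{equation*}
\|\sqrt{a_0}u\|_{\pd X}^2\le C\langle\mu\rangle^{-2}\|g/\sqrt{a_0}\|_{\pd X}^2+C\langle\mu\rangle^{-1}\|f\|_X\|u\|_X.
\end{equation*}
Multiplying by $\langle\mu\rangle^{-2+\delta}$ and using Young once more produces only terms of the form $\langle\mu\rangle^{-2+\delta}\|g/\sqrt{a_0}\|_{\pd X}^2$, $\langle\mu\rangle^{2\delta}\|f\|_X^2$, and $\langle\mu\rangle^{-6}\|u\|_X^2$: the first two already sit on the right of \eqref{2l2}, and the last is absorbed into the left side once $\mu_0$ is large. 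This closes the high-frequency estimate for every $\delta\ge0$.

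For $\abs{\mu}\le\mu_0$ the weights $\langle\mu\rangle$ are all comparable to $1$, so \eqref{2l2} reduces to the uniform bound $\|u\|_{H^1(X)}\le C(\|f\|_X+\|g/\sqrt{a_0}\|_{\pd X})$ over a compact frequency window, which I would establish by contradiction. Given normalised $u_n$ with $\|u_n\|_{H^1}=1$, frequencies $\mu_n\to\mu_*\in[-\mu_0,\mu_0]$, and vanishing data $\|f_n\|_X,\|g_n/\sqrt{a_0}\|_{\pd X}\to0$, a weak-$H^1$ and Rellich limit $u_*$ solves $(\Delta-\mu_*^2)u_*=0$ with $i\pd_nu_*+a_0(\mu_*+i)u_*=0$. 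The imaginary part of Green's identity gives $\mu_*\|\sqrt{a_0}u_*\|_{\pd X}^2=0$ and the real part gives $\|\nabla u_*\|_X^2-\mu_*^2\|u_*\|_X^2+\|\sqrt{a_0}u_*\|_{\pd X}^2=0$. If $\mu_*\neq0$, then $u_*$ and, through the boundary condition, $\pd_nu_*$ both vanish on the nonempty open set $\{a_0>0\}\subset\pd X$, so a boundary unique continuation principle for the Helmholtz operator forces $u_*\equiv0$ on the connected $X$; if $\mu_*=0$, the real part forces $\nabla u_*\equiv0$ together with $u_*=0$ on $\{a_0>0\}$, so again $u_*\equiv0$. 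Here the perturbation $ia_0u$ is essential: it contributes the sign-definite term $+\|\sqrt{a_0}u_*\|_{\pd X}^2$ that removes the constants otherwise lying in the kernel at zero frequency. Finally, since $u_n\to u_*=0$ strongly in $L^2$, the real part of Green's identity for $u_n$ bounds $\|\nabla u_n\|_X^2$ by $\mu_n^2\|u_n\|_X^2+\|g_n/\sqrt{a_0}\|_{\pd X}\|\sqrt{a_0}u_n\|_{\pd X}+\|f_n\|_X\|u_n\|_X\to0$, so $\|u_n\|_{H^1}\to0$, contradicting the normalisation.

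The main obstacle is the unique continuation step. One must pass to the weak limit with enough care that $u_*$ is an honest Helmholtz solution whose Cauchy data on $\{a_0>0\}$ are well defined, and then invoke a unique continuation principle valid in the available boundary regularity; this is the delicate point that ultimately limits how rough $\pd X$ may be, as in the $C^1$ case of Corollary~\ref{1t2}. Everything else is routine integration by parts together with trace and Young inequalities.
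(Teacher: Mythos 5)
Your proof is correct and follows essentially the same route as the paper: high frequencies by rewriting \eqref{2l3} as the unperturbed impedance condition with data $g-ia_0u$, applying \eqref{1l1}, and absorbing $\|\sqrt{a_0}u\|_{\partial X}^2$ through the imaginary-part pairing identity plus Young; low frequencies by a compactness--unique-continuation contradiction. If anything, your handling of the low-frequency limit is more careful than the paper's write-up, which drops the $a_0\mu u$ term from the limiting boundary condition and then asserts that the real part of Green's identity yields $\|\sqrt{a_0}u\|_{\partial X}=0$ (which does not follow as written); your case split --- imaginary part giving $\mu_*\|\sqrt{a_0}u_*\|_{\partial X}^2=0$ when $\mu_*\neq0$, real part killing both $\nabla u_*$ and the boundary term when $\mu_*=0$ --- is the argument that actually closes that step.
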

\begin{proof}
1. We claim \eqref{2l2} holds for $\abs{\mu}\ge \mu_0>0$. Firstly pair \eqref{2l1} with $u$ and take the imaginary part to see
\begin{equation}
\|\sqrt{a_0}u\|_{\pd X}^2=\mu^{-1}\cre\langle g,u\rangle_{\pd X}-\mu^{-1}\cim\langle f,u\rangle.
\end{equation}
Estimate 
\begin{equation}
\|\sqrt{a_0}u\|_{\pd X}^2\le C\epsilon^{-1}\mu^{-4+\delta}\|f\|^2+C\mu^{-2}\|g/\sqrt{a_0}\|_{\pd X}^2+\epsilon\mu^{2-\delta}\|u\|^2.
\end{equation}
Rewrite the boundary condition \eqref{2l3} as $i\pd_n u+a_0\mu u=g-ia_0 u$ and apply the resolvent estimate \eqref{1l1} to see
\begin{multline}
\|u\|_{L^2(X)}^2+\langle \mu\rangle^{-2}\|\nabla u\|_{L^2(X)}\le C\langle \mu\rangle^{2\delta}\|f\|_{L^2(X)}^2+C\langle \mu\rangle^{-2+\delta}\|\sqrt{a_0} u\|_{\pd X}^2\\
+C\langle \mu\rangle^{-2+\delta}\|g/\sqrt{a_0}\|_{L^2(\pd X)}^2\le C(\langle \mu\rangle^{2\delta}+\langle \mu\rangle^{-6+2\delta})\|f\|_{L^2(X)}^2\\+C(\langle \mu\rangle^{-2+\delta}+\langle \mu\rangle^{-4+\delta})\|g/\sqrt{a_0}\|_{L^2(\pd X)}^2+\epsilon \|u\|^2.
\end{multline}
Absorb the last term to observe the claim. 

2. Assume there exists a sequence $\mu_n\rightarrow \mu$, for some $\abs{\mu}<\mu_0$ such that \eqref{2l2} does not hold, that is, there are $\|u_n\|_{L^2}^2+\langle \mu_n\rangle^{-2}\|\nabla u\|^2\equiv 1$ and $f_n=o_{L^2}(1)$, $g_n=\sqrt{a_0}o_{L^2(\pd X)}(1)$ such that
\begin{gather}\label{2l10}
(\Delta -\mu_n^2)u_n=f_n,\text{ on }X,\\
i\pd_n u_n+a_0 \mu_n u_n+ia_0 u_n=g_n,\text{ on }\pd X. 
\end{gather}
Note that this implies that $\|u_n\|$ uniformly bounded in $H^1$. Pair \eqref{2l10} with $u_n$ and take the real part to see $\|\nabla u_n\|=\abs{\mu}\|u_n\|+o(1)$ and hence $\|u_n\|=(1+\mu^2/\langle \mu\rangle^2)^{-1}+o(1)$. Via a subsequence we have $u_n\rightarrow u\in H^1$ weakly in $H^1$. As $X$ is compact, $u_n\rightarrow u$ in $H^{3/4}$-norm and thus $u_n\rightarrow u$ in $L^2(\pd X)$-norm. Now for any $\phi\in H^1_0$ we have
\begin{equation}
\langle f_n,\phi\rangle=\langle \nabla u_n,\nabla \phi\rangle-\mu_n^2\langle u_n,\phi\rangle+\langle i g_n-ia_0\mu_n u_n+a_0 u_n,\phi\rangle_{\pd X}
\end{equation}
Take $n\rightarrow\infty$ to see
\begin{equation}
\langle \nabla u,\nabla \phi\rangle-\mu^2\langle u,\phi\rangle+\langle a_0 u,\phi\rangle_{\pd X}=0.
\end{equation}
Therefore $u\in H^1$ is the weak solution to
\begin{gather}\label{2l4}
(\Delta-\mu^2) u=0,\text{ on }X,\\
i\pd_n u+ia_0 u=0,\text{ on }\pd X. 
\end{gather}
Pair \eqref{2l4} with $u$ and take the real part to see $\|\sqrt{a_0}u\|_{\pd X}=0$. Hence over $\{a_0>0\}$, $u=0$ and hence $\pd_n u=0$. This implies $u=0$ on $X$ via the classical unique continuation: see for example, \cite[Theorem 1.9]{arrv09}. Thus $u_n\rightarrow 0$ weakly in $H^1$ and in $L^2$-norm, but this contradicts that $\|u_n\|\rightarrow (1+\mu^2/\langle \mu\rangle^2)^{-1}>0$. 
\end{proof}

We now show the main resolvent estimate in this section, for an overdamped interior impedance problem on $X$. 
\begin{proposition}[Transverse Resolvent Down Through Zero Frequency]
Assume the resolvent estimate \eqref{1l1} of parameter $\delta\ge 0$ holds on $X$, and let $b_0(x)\in L^\infty(\pd X)$ be that $0\le b_0\le a_0$ almost everywhere. Then for each $\lambda_0>0$, there exists $C>0$ such that uniformly for any $\mu, \lambda$, $f\in L^2(X)$, $g\in \sqrt{a_0}L^2(\pd X)$, $u$ that
\begin{gather}
(\Delta -z)u=f,\text{ on }X,\\
i\pd_n u+a_0\lambda u+ib_0 u=g,\text{ on }\pd X,
\end{gather}
where $\abs{\lambda}\ge \lambda_0$, $-\infty<z\le\lambda^2$, we have
\begin{gather}\label{2l9}
\|u\|_{H^1(X)}^2\le C\langle \lambda\rangle^{2+2\delta}\|f\|_{L^2(X)}^2+C\langle \lambda\rangle^{\delta} \|g/\sqrt{a_0}\|_{L^2(\pd X)}^2.
\end{gather}
If we assume $0<c_0\le b_0/a_0$ on $\pd X$, we could take $\lambda_0=0$, that is, the same resolvent estimates hold for any $\lambda\in \mathbb{R}$.
\end{proposition}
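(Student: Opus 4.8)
The plan is to pair the interior equation with $u$, reduce to two real energy identities, and then split on the size of the spectral parameter $z$, applying the all-frequency estimate of Lemma~\ref{2t2} at the \emph{matched} interior frequency $\nu=\sqrt{\max(z,0)}$ while pushing the boundary-frequency mismatch into the impedance data. Replacing $u$ by $\bar u$ turns the system for $\lambda,g,f$ into the same system for $-\lambda,-\bar g,\bar f$, so I may assume $\lambda>0$. Pairing $(\Delta-z)u=f$ with $u$ and substituting $\pd_n u=-ig+ia_0\lambda u-b_0 u$ from the boundary condition, the real and imaginary parts give
\[ \|\nabla u\|_X^2-z\|u\|_X^2+\|\sqrt{b_0}\,u\|_{\pd X}^2=\cre\langle f,u\rangle+\cim\langle g,u\rangle_{\pd X}, \]
\[ \lambda\,\|\sqrt{a_0}\,u\|_{\pd X}^2=\cre\langle g,u\rangle_{\pd X}-\cim\langle f,u\rangle. \]
The damping mass $\|\sqrt{b_0}\,u\|_{\pd X}^2$ sits on the favourable side of the first identity, and the second controls the boundary mass $\|\sqrt{a_0}\,u\|_{\pd X}^2$ with a gain of $\lambda^{-1}$; this factor is the only place $\lambda\ge\lambda_0>0$ enters.

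Fix a small $c>0$ depending only on the constant in \eqref{1l1}. When $z\le -c$ the first identity is coercive: $\|\nabla u\|^2+|z|\|u\|^2+\|\sqrt{b_0}\,u\|^2\le \|f\|\,\|u\|+\|g/\sqrt{a_0}\|\,\|\sqrt{a_0}\,u\|$, and feeding the second identity to bound $\|\sqrt{a_0}\,u\|^2$ and applying Young's inequality against the coercivity constant $c$ closes \eqref{2l9} with $\langle\lambda\rangle$-powers to spare. When $-c\le z\le\lambda^2$ I set $\nu=\sqrt{\max(z,0)}\in[0,\lambda]$ and rewrite the system in the form of Lemma~\ref{2t2} at frequency $\nu$: the interior source becomes $f-(\nu^2-z)u$, and the boundary source becomes $g-a_0(\lambda-\nu)u+i(a_0-b_0)u$, whose weighted norm is $\lesssim\|g/\sqrt{a_0}\|+(\lambda-\nu)\|\sqrt{a_0}\,u\|+\|\sqrt{a_0}\,u\|$ since $0\le a_0-b_0\le a_0$. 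Crucially $\nu^2-z$ is nonzero only when $z<0$, and there $\nu=0$, so the interior perturbation contributes only $c^2\|u\|^2$, absorbed by taking $c$ small.

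After applying \eqref{2l2} and multiplying by $\langle\nu\rangle^2\le\langle\lambda\rangle^2$, the left side is $\langle\nu\rangle^2\|u\|^2+\|\nabla u\|^2\ge\|u\|_{H^1}^2$. The decisive contribution is the mismatch term $\langle\nu\rangle^\delta(\lambda-\nu)^2\|\sqrt{a_0}\,u\|^2$: substituting the second identity and using $(\lambda-\nu)^2/\lambda\le\lambda$ turns it into $\langle\nu\rangle^\delta\|g/\sqrt{a_0}\|^2+\langle\nu\rangle^\delta\lambda\|f\|\,\|u\|$, and splitting the last product as $(\langle\nu\rangle\|u\|)\,(\langle\nu\rangle^{\delta-1}\lambda\|f\|)$ lets Young's inequality absorb $\epsilon\langle\nu\rangle^2\|u\|^2$ into the left side while leaving $\langle\nu\rangle^{2\delta-2}\lambda^2\|f\|^2\le\langle\lambda\rangle^{2+2\delta}\|f\|^2$. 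This matching of $\langle\lambda\rangle$-powers is the main obstacle: because \eqref{1l1} couples the interior and boundary frequencies, one must absorb against the weight $\langle\nu\rangle$ actually present on the left rather than $\langle\lambda\rangle$, so that the bookkeeping survives even when $z\ll\lambda^2$ and $\langle\nu\rangle\ll\langle\lambda\rangle$. Collecting the terms yields \eqref{2l9}.

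Finally, when $0<c_0\le b_0/a_0$ the inequality $\|\sqrt{a_0}\,u\|_{\pd X}^2\le c_0^{-1}\|\sqrt{b_0}\,u\|_{\pd X}^2$ lets the favourable damping mass in the first identity supply exactly the boundary control that the second identity can no longer provide as $\lambda\to 0$. Re-running the two regimes for bounded $|\lambda|\le\lambda_0$, where $z$ is bounded above and the problem is genuinely low-frequency, with the first identity in place of the $\lambda^{-1}$-gain, removes the restriction and permits $\lambda_0=0$.
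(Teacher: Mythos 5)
Your proof is correct, and it rests on the same key reduction as the paper's: rewrite the problem at the matched interior frequency ($\nu=\sqrt{\max(z,0)}$ for you, $\mu_n=\sqrt{z_n}$ in the paper), push the frequency mismatch $-a_0(\lambda-\nu)u+i(a_0-b_0)u$ into the impedance data, control it through the imaginary-part identity (the $\lambda^{-1}$-gain on $\|\sqrt{a_0}u\|_{\pd X}^2$), and invoke the all-frequency estimate \eqref{2l2} of Lemma \ref{2t2}. Where you differ is in execution, and the difference is genuine. The paper argues by contradiction with $H^1$-normalised sequences and needs three regimes: $z_n\ge 0$ (Lemma \ref{2t2}), $z_n\le -c<0$ (coercivity), and $z_n\to 0^-$, the last resolved by a Rellich/weak-limit argument; its overdamped extension to $\lambda_0=0$ repeats that compactness step for $\lambda_n\to 0$. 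You instead give a direct quantitative estimate with only two regimes, folding the small-negative-$z$ window into the Lemma \ref{2t2} application at $\nu=0$ by treating $zu$ as an interior perturbation of size $c\|u\|$, absorbed once $c$ is small relative to the constant of \eqref{2l2}; likewise your overdamped case replaces the $\lambda^{-1}$-gain by $\|\sqrt{a_0}u\|_{\pd X}^2\le c_0^{-1}\|\sqrt{b_0}u\|_{\pd X}^2$ and the real-part identity, which produces a $\bigo(\lambda_0^2)\|u\|^2$ error term --- so you should state explicitly that $\lambda_0$, like $c$, must be taken small for that absorption to close, which is harmless since large $\abs{\lambda}$ is already settled. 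Your approach buys explicit constants and avoids all compactness outside Lemma \ref{2t2}; and your remark that absorption must be performed against the weight $\langle\nu\rangle$ actually present on the left-hand side, rather than $\langle\lambda\rangle$, is precisely the bookkeeping point (e.g.\ $\langle\nu\rangle^{2\delta-2}\lambda^2\le\langle\lambda\rangle^{2+2\delta}$ whether $\delta\ge1$ or $\delta<1$) that makes the matched-frequency reduction work. The paper's compactness-contradiction framework, in exchange, shortens the bookkeeping: the a priori bound $\|\sqrt{a_0}u_n\|_{\pd X}^2=o(\langle\lambda_n\rangle^{-2-\delta})$ is derived once and reused throughout, at the cost of non-explicit constants.
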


\begin{proof}
1. Assume \eqref{2l9} is not true, that there exists $\abs{\lambda_n}\ge \lambda_0>0$, $-\infty<z_n\le \lambda_n^2$, $\|u_n\|_{H^1}\equiv 1$, $f_n=o_{L^2}(\langle \lambda_n\rangle^{-1-\delta})$, $g_n=\sqrt{a_0}o_{\pd X}(\langle \lambda_n\rangle^{-\delta/2})$ such that 
\begin{gather}\label{2l6}
(\Delta -z_n)u_n=f_n,\text{ on }X,\\
\label{2l12}
i\pd_n u_n+a_0\lambda_n u_n+ib_0 u_n=g_n,\text{ on }\pd X.
\end{gather}
We firstly establish some a priori estimates. Pair \eqref{2l6} with $u_n$ and take the real and imaginary parts to see
\begin{gather}\label{2l8}
\|\nabla u_n\|^2-z_n\|u_n\|^2+\|\sqrt{b_0}u_n\|_{\pd X}^2=\cre{\langle f_n,u_n\rangle}-\cim{\langle g_n,u_n\rangle_{\pd X}},\\
\|\sqrt{a_0}u_n\|_\pd^2=\lambda_n^{-1}\cre\langle g_n,u_n\rangle_{\pd X}-\lambda_n^{-1}\cim\langle f_n,u_n\rangle,
\end{gather}
where we have divided the imaginary part by $\lambda_n$ as $\lambda_n$ is uniformly bounded away from $0$. Furthermore
\begin{equation}
\|\sqrt{a_0}u_n\|_{\pd X}^2\le C\epsilon^{-1}\lambda_n^{-2}\|g_n/\sqrt{a_0}\|_{\pd X}^2+C\abs{\lambda_n}^{-1}\abs{\langle f_n,u_n\rangle}+\epsilon\|\sqrt{a_0}u_n\|_{\pd X}^2,
\end{equation}
and the absorption of the last term leads to
\begin{equation}\label{2l7}
\|\sqrt{a_0}u_n\|_{\pd X}^2\le C\lambda_n^{-2}\|g_n/\sqrt{a_0}\|_{\pd X}^2+C\lambda_n^{-1}\abs{\langle f_n,u_n\rangle}=o(\langle \lambda_n\rangle^{-2-\delta}),
\end{equation}
because $\langle \lambda_n\rangle\le \abs{\lambda_n}\langle \lambda_0^{-1}\rangle$.

2a. Assume that $\{z_n\}$ has infinitely many non-negative elements, and via a subsequence we have $z_n\ge 0$ for all $n$. Let $\mu_n=\sqrt{z_n}\le \abs{\lambda_n}$. Note that the equation \eqref{2l6} to \eqref{2l12} is equivalent to
\begin{gather}
(\Delta -\mu_n^2)u_n=f_n,\text{ on }X,\\
i\pd_n u_n+a_0\mu_n u_n+ia_0 u_n=g_n',\text{ on }\pd X,\\
g_n'=g_n+(\mu_n-\lambda_n+i)a_0 u_n-ib_0 u_n,
\end{gather}
by reformulating the boundary condition. Note that $b_0\le C a_0$ over $\pd X$ and estimate
\begin{equation}
\|g_n'/\sqrt{a_0}\|_{\pd X}^2\le \|g_n/\sqrt{a_0}\|_{\pd X}^2+C\lambda_n^2\|\sqrt{a_0}u_n\|_{\pd X}^2=o(\langle \lambda_n\rangle^{-\delta})
\end{equation}
by \eqref{2l7}. Apply the estimate \eqref{2l2} given in Lemma \ref{2t2} to see
\begin{gather}
\langle \mu_n\rangle^{-2}=\langle \mu_n\rangle^{-2}\|u_n\|_{H^1}^2\le\|u_n\|^2+\langle \mu_n\rangle^{-2}\|\nabla u_n\|\le C\langle \mu_n\rangle^{2\delta}\|f_n\|^2\\
+C\langle \mu_n\rangle^{-2+\delta}\|g_n'/\sqrt{a_0}\|_{\pd X}^2=o(\langle \mu_n\rangle^{2\delta}\lambda_n^{-2-2\delta}+\langle \mu_n\rangle^{-2+\delta}\lambda_n^{-\delta})=o(\langle \mu_n\rangle^{-2})
\end{gather}
since $1\le\langle \mu_n\rangle\le \langle \lambda_n\rangle$. This is the desired contradiction. 

2b. By excluding the possibility 2a, via a subsequence we have $z_n<0$ for all $n$. Assume that there exists a uniform bound $-1\le -c<0$ that $z_n\le-c<0$ for all $n$. The equation \eqref{2l8} gives
\begin{equation}
\|\nabla u_n\|^2+\abs{z_n}\|u_n\|^2\le \abs{\langle f_n, u_n\rangle}+\abs{\langle g_n/\sqrt{a_0}, \sqrt{a_0}u_n\rangle_{\pd X}}=o(1).
\end{equation}
However 
\begin{equation}
\|\nabla u_n\|^2+\abs{z_n}\|u\|^2\ge c\|u_n\|_{H^1}^2=c>0
\end{equation}
gives the desired contradiction. 

2c. The only possibility left is that via a subsequence $z_n\rightarrow 0$ with $z_n<0$. The equation \eqref{2l8} gives 
\begin{equation}
\|\nabla u_n\|^2\le \abs{\langle f_n, u_n\rangle}+\abs{\langle g_n/\sqrt{a_0}, \sqrt{a_0}u_n\rangle_{\pd X}}=o(1).
\end{equation}
As $\{u_n\}$ are normalised in $H^1$, we know $\|u_n\|=1+o(1)$ and there exists $u\in H^1$ such that $u_n\rightarrow u$ weakly in $H^1$. Then $\nabla u_n\rightarrow \nabla u$ weakly in $L^2$, but $\nabla u_n\rightarrow 0$ in $L^2$, thus $\nabla u\equiv 0$, and $u$ is equal to a constant. Moreover as $X$ is compact, $u_n\rightarrow u$ in both $L^2$ and $L^2(\pd X)$ and \eqref{2l7} implies $u\equiv0$ over the support of $a_0(x)$. Then $u\equiv 0$ over $X$, but $\|u\|=\lim \|u_n\|=1$, yielding the contradiction. 

3. Now we assume further that $c_0\le b_0/a_0$ on $\pd X$ and show \eqref{2l9} for all $\lambda\in\mathbb{R}$. Assume now we have $\lambda_n\rightarrow 0$, then $f=o_{L^2}(1)$, $g=\sqrt{a_0}o_{\pd X}(1)$. Note that we could not use \eqref{2l7} for it fails as $\lambda_0=0$. From \eqref{2l8} we could estimate
\begin{multline}
\|\nabla u_n\|^2+\|\sqrt{b_0}u_n\|_{\pd X}^2\le (\lambda_n^2+z_n) \|u_n\|^2+\|f_n\|\|u_n\|+C\epsilon^{-1}\|g_n/\sqrt{a_0}\|_{\pd X}^2\\
+\epsilon\|\sqrt{a_0}u_n\|_{\pd X}^2.
\end{multline}
Since $c_0 a_0\le b_0$ on ${\pd X}$ and $z_n\le \lambda_n^2$, we have
\begin{equation}
\|\nabla u_n\|^2+\|\sqrt{a_0} u_n\|_{\pd X}^2\le C\lambda_n^2 \|u_n\|^2+C\|f_n\|\|u_n\|+C\|g_n/\sqrt{a_0}\|_{\pd X}^2=o(1),
\end{equation}
and $\|u_n\|=1+o(1)$. Follow the argument in Step 2c to obtain that $u_n\rightarrow 0$ in $L^2$ and hence the contradiction. 
\end{proof}

\section{Resolvent Estimates on Product Manifolds}
Let $H_\pd^s(Y)$ be the closure of
\begin{equation}
\left\{u\in C^\infty(Y)\cap H^s(Y): u \text{ satisfies the chosen boundary condition on }\pd Y\right\}
\end{equation}
under $H^s(Y)$-norm. Let $H_\pd^s(\Omega)$, the Sobolev space of order $s$ on $\Omega$ with the chosen boundary condition on $X\times \pd Y$ be the closure of
\begin{multline}
\left\{u\in C^\infty(\Omega)\cap H^s(\Omega): \forall x\in X,\right.\\
\left. u(x,\cdot) \text{ satisfies the chosen boundary condition on }\pd Y\right\}
\end{multline}
under $H^s(\Omega)$-norm. In this section, the inner products are defaulted to $L^2(\Omega)$ or the $L^2$-space over the indicated space. 
\begin{proposition}[Resolvent Estimates on Product Manifolds]
On $\Omega=X\times Y$, consider a solution $u\in H_\pd^1(\Omega)$ to the interior impedance problem,
\begin{gather}\label{3l1}
(\Delta_x+\Delta_y-\lambda^2)u(x,y)=f, \text{ on }\Omega,\\
\label{3l2}
(i\pd_n+a_0(x)\lambda+ib_0(x))u(x,y)=g, \text{ on }\Gamma=\pd X\times Y,
\end{gather}
where $f\in L^2(\Omega)$, $g\in \sqrt{a_0}L^2(\Gamma)$, and $b_0(x)$ is some $L^\infty(\pd X)$-function that $0\le b_0\le a_0$ almost everywhere. Assume the resolvent estimate \eqref{1l1} of parameter $\delta\ge 0$ holds on $X$. Then for any $\lambda_0>0$, we have some $C>0$ independent of $\lambda, u, f, g$ such that for any $\abs{\lambda}\ge\lambda_0$,
\begin{equation}\label{3l3}
\|u\|_{L^2(\Omega)}^2+\langle \lambda\rangle^{-2}\|\nabla u\|_{L^2(\Omega)}^2\le C\langle \lambda\rangle^{2+2\delta}\|f\|_{L^2(\Omega)}^2+C\langle \lambda\rangle^{\delta}\|g/\sqrt{a_0}\|_{L^2(\Gamma)}^2.
\end{equation}
If $f\in H^1(\Omega)$ and $g\in \sqrt{a_0}H^{1}(\Gamma)$, then
\begin{multline}\label{3l9}
\|u\|_{L^2(\Omega)}^2+\langle \lambda\rangle^{-2}\|\nabla u\|_{L^2(\Omega)}^2\le C\langle \lambda\rangle^{2\delta}\|f\|_{H^1(\Omega)}^{2}+C\langle \lambda\rangle^{-2+\delta}C\|g/\sqrt{a_0}\|_{H^1(\Gamma)}^2.
\end{multline}
If we assume $0<c_0\le b_0/a_0$ on $\pd X$, we could take $\lambda_0=0$, that is, the same resolvent estimates hold for any $\lambda\in \mathbb{R}$.
\end{proposition}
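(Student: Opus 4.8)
The plan is to diagonalise in the transverse variable $y$ and reduce the interior impedance problem \eqref{3l1}--\eqref{3l2} to the one-parameter family of problems on the cross-section $X$ already handled in Section 2. Since $\Delta_y\ge 0$ is essentially self-adjoint on $L^2(Y)$ under the chosen boundary condition, it admits a projection-valued spectral measure; write $\sigma^2\ge 0$ for its spectral parameter, so that $L^2(\Omega)=L^2(X)\otimes L^2(Y)$ decomposes as a direct integral of fibres $L^2(X)$ indexed by $\sigma$. The operator $\Delta_x$ acts only in $x$, and the impedance operator $i\pd_n+a_0(x)\lambda+ib_0(x)$ on $\Gamma=\pd X\times Y$ involves only the $x$-normal derivative and multiplication by functions of $x$; both therefore commute with the spectral projections of $\Delta_y$, and the boundary trace on $\pd X$ intertwines the decompositions on $\Omega$ and on $\Gamma$. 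Decomposing $u,f,g$ into fibre components $u_\sigma,f_\sigma,g_\sigma$, the fibre problem is $(\Delta_x-z)u_\sigma=f_\sigma$ on $X$ and $i\pd_n u_\sigma+a_0\lambda u_\sigma+ib_0u_\sigma=g_\sigma$ on $\pd X$ with $z=\lambda^2-\sigma^2\le\lambda^2$, which lies in the range $-\infty<z\le\lambda^2$ of the Proposition of Section 2.

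For \eqref{3l3} I would apply the transverse resolvent estimate \eqref{2l9} fibrewise -- its constant is uniform over $z\le\lambda^2$, hence over all $\sigma$ -- and integrate against the spectral measure. By Parseval this at once bounds $\|u\|_{L^2(\Omega)}^2$, which is the essential term, since it is here that the hypothesis \eqref{1l1} enters. The remaining gradient cannot be summed fibrewise: $\|\nabla_y u\|^2=\int\sigma^2\,\|u_\sigma\|_{L^2(X)}^2$ carries the weight $\sigma^2$, which the uniform fibre bound does not control (that would need $f\in H^1$). Instead I would recover the full gradient from a global energy identity, pairing \eqref{3l1} with $u$ in $L^2(\Omega)$ and integrating by parts, with no contribution on $X\times\pd Y$ by the self-adjoint choice there: its real part gives $\|\nabla u\|^2\le\lambda^2\|u\|^2+|\langle f,u\rangle|+|\langle g,u\rangle_\Gamma|$, and its imaginary part, with $|\lambda|\ge\lambda_0$, controls $\|\sqrt{a_0}u\|_\Gamma^2$ as in \eqref{2l7}. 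Since $\langle\lambda\rangle^{-2}\lambda^2\le 1$ these combine with the bound on $\|u\|_{L^2(\Omega)}^2$ to give the gradient term of \eqref{3l3}. When $0<c_0\le b_0/a_0$ the Proposition of Section 2 holds down to $\lambda=0$, so the fibrewise step runs for all $\lambda$, and in the energy identity $\|\sqrt{a_0}u\|_\Gamma^2$ is instead dominated by $c_0^{-1}\|\sqrt{b_0}u\|_\Gamma^2$ from the real part, exactly as in Step 3 of the proof of the Section 2 Proposition.

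For the sharper estimate \eqref{3l9} the goal is to trade one factor $\langle\lambda\rangle^{-2}$ for regularity in $y$, and the decisive regime is the transverse zero frequency $\sigma^2\approx\lambda^2$, i.e.\ $z\approx 0$ and $\mu:=\sqrt{\lambda^2-\sigma^2}\approx 0$. There the boundary coefficient $a_0\lambda$ cannot be matched to the Lemma \ref{2t2} coefficient $a_0\mu$ without paying the mismatch $(\lambda-\mu)^2$, which at $\sigma^2=\lambda^2$ is the full $\lambda^2$; this is exactly the origin of the lossy power $\langle\lambda\rangle^{2+2\delta}$ in \eqref{3l3}. The point is that in this regime $\langle\sigma\rangle^2=1+\sigma^2\approx\langle\lambda\rangle^2$, so the transverse part of the $H^1$-norms, $\|f\|_{H^1(\Omega)}^2\ge\int\langle\sigma\rangle^2\|f_\sigma\|^2$ and $\|g/\sqrt{a_0}\|_{H^1(\Gamma)}^2\ge\int\langle\sigma\rangle^2\|g_\sigma/\sqrt{a_0}\|^2$, carries precisely the weight that absorbs the loss. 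Concretely I would split the spectral integral at $\sigma=|\lambda|$: for $\sigma\ge|\lambda|$ use \eqref{2l9} and absorb $\langle\lambda\rangle^2\le C\langle\sigma\rangle^2$; for $\sigma\le|\lambda|$ reduce to Lemma \ref{2t2} with the sharp mismatch bound $(\lambda-\mu)^2\le\sigma^4/\lambda^2\le\sigma^2$ and the a priori boundary estimate for $\|\sqrt{a_0}u_\sigma\|_{\pd X}^2$, absorbing the frequency loss through the elementary inequality $\langle\mu\rangle^{-2+\delta}\le C\langle\lambda\rangle^{-2+\delta}\langle\sigma\rangle^2$ valid for $\sigma\le|\lambda|$. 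Both regimes yield the fibre bound $\|u_\sigma\|_{L^2(X)}^2\le C\langle\lambda\rangle^{2\delta}\langle\sigma\rangle^2\|f_\sigma\|^2+C\langle\lambda\rangle^{-2+\delta}\langle\sigma\rangle^2\|g_\sigma/\sqrt{a_0}\|^2$; integrating gives the $\|u\|_{L^2(\Omega)}^2$-part of \eqref{3l9}, and the gradient follows from the same energy identity.

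I expect the main obstacle to be precisely this transverse zero-frequency window $\sigma^2\approx\lambda^2$ in \eqref{3l9}, where the cross-sectional resolvent degenerates and the $\langle\lambda\rangle^2$ loss must be paid entirely out of $y$-regularity; the uniform (in both $\sigma$ and $\lambda$) bookkeeping of the reformulated boundary data $g_\sigma'=g_\sigma+(\mu-\lambda)a_0u_\sigma+i(a_0-b_0)u_\sigma$ and its absorption is the delicate part. A secondary technical point is to justify the direct-integral reduction rigorously, in particular that the boundary trace on $\Gamma$ commutes with the spectral projections of $\Delta_y$, which is what legitimises the fibrewise use of the Section 2 Proposition.
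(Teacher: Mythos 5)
Your proposal is correct in outline and reaches both estimates, but it implements them by a genuinely different route than the paper, in two respects. First, for \eqref{3l3} you diagonalise $\Delta_y$ exactly through a direct-integral decomposition, so each fibre solves the cross-section problem with $z=\lambda^2-\sigma^2$ exactly and \eqref{2l9} applies with no error term; the paper instead works with honest spectral projectors $\Pi_{\eta,\lambda}$ onto windows of width $\epsilon/\langle\eta\rangle\langle\lambda\rangle^{1+\delta}$, keeps $\Pi_{\eta,\lambda}(\Delta_y-\eta^2)u$ as an extra forcing term, and absorbs it --- the window width is tuned exactly so that the $\langle\lambda\rangle^{2+2\delta}$ loss in \eqref{2l9} times the squared window width is $\bigo(\epsilon^2)$. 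The paper's choice buys precisely what you flag as your ``secondary technical point'': every object remains a genuine $L^2(\Omega)$-function solving a genuine boundary-value problem, so no a.e.-in-$\sigma$ fibrewise weak formulation or trace-intertwining needs to be justified, which matters since $Y$ may be non-compact with continuous spectrum; your choice buys exactness and dispenses with the absorption step, at the cost of that justification being the longest part of a careful write-up. (Both arguments then recover $\|\nabla u\|$ from the global identities \eqref{3l5}, \eqref{3l7} rather than fibrewise; that step is identical.) Second, for \eqref{3l9} the paper is much shorter than your plan: it applies $\Lambda=(1+\Delta_y)^{1/2}$, which commutes with everything in the problem, to get the \eqref{3l3}-type bound \eqref{3l11} for $\|u\|_{H^1(\Omega)}$ in terms of $H^1$-data, then divides by $\lambda^2$ and uses the a priori estimate \eqref{3l8} to trade $\|u\|^2$ for $\lambda^{-2}\|\nabla u\|^2$. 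Your splitting at $\sigma=|\lambda|$ --- using $\langle\lambda\rangle^2\le\langle\mu\rangle^2\langle\sigma\rangle^2$, the mismatch bound $|\lambda-\mu|\le\sigma^2/|\lambda|$, and Lemma \ref{2t2} together with the fibre analogue of \eqref{2l7} on the low-$\sigma$ range --- does close, and it gives a sharper structural explanation (the $\langle\lambda\rangle^2$ loss lives only near transverse frequency $z\approx0$, where the weight $\langle\sigma\rangle^2\approx\langle\lambda\rangle^2$ pays for it), but it re-derives quantitatively what \eqref{2l9} and the $\Lambda$-trick package away. Two points to patch in a full write-up: your fibrewise boundary estimate for $\|\sqrt{a_0}u_\sigma\|_{\pd X}$ requires $|\lambda|\ge\lambda_0$ or the lower bound $b_0\ge c_0a_0$, exactly as in the paper; and the $\lambda_0=0$ case of \eqref{3l9} is easiest obtained by noting that for bounded $\lambda$ it follows directly from \eqref{3l3}, since all powers of $\langle\lambda\rangle$ are then comparable and $L^2$-norms of the data are dominated by $H^1$-norms.
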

\begin{remark}
This is the boundary impedance version of \cite[Theorem 6.1]{bz04}. 
\end{remark}
\begin{proof}
1. We show some a priori estimates on $\Omega$. Pair $(\Delta-\lambda^2) u$ against $u$ and take the real and imaginary parts to see
\begin{gather}\label{3l6}
\|\nabla u\|^2-\lambda^2\|u\|^2+\|\sqrt{b_0}u\|_{\Gamma}^2=\cre{\langle f,u\rangle}-\cim{\langle g,u\rangle_{\Gamma}},\\
\|\sqrt{a_0}u\|_{\Gamma}^2=\lambda^{-1}\cre\langle g,u\rangle_{\Gamma}-\lambda^{-1}\cim\langle f,u\rangle.
\end{gather}
Estimate
\begin{equation}
\|\sqrt{a_0}u\|_{\Gamma}^2\le C\epsilon^{-1}\lambda^{-3}\|f\|^2+C\lambda^{-2}\|g/\sqrt{a_0}\|_{\Gamma}^2+\epsilon \lambda \|u\|^2,
\end{equation}
and
\begin{multline}
\abs{\|\nabla u\|^2-\lambda^2\|u\|^2}\le C\epsilon^{-1}\lambda^{-2}\|f\|^2+C\lambda^{-1}\|g/\sqrt{a_0}\|_{\Gamma}^2+C\lambda\|\sqrt{a_0}u\|_{\Gamma}^2\\
\le C\epsilon^{-1}\lambda^{-2}\|f\|^2+C\lambda^{-1}\|g\|_{\Gamma}^2+\epsilon \lambda^2\|u\|^2.
\end{multline}
This implies for $\abs{\lambda}\ge \lambda_0>0$ we have
\begin{gather}\label{3l8}
\|u\|^2\le C\epsilon^{-1}\lambda^{-4}\|f\|^2+C\lambda^{-3}\|g/\sqrt{a_0}\|_{\Gamma}^2+C\lambda^{-2}\|\nabla u\|^2,\\
\label{3l5}
\|\nabla u\|^2\le C\epsilon^{-1}\lambda^{-2}\|f\|^2+C\lambda^{-1}\|g/\sqrt{a_0}\|_{\Gamma}^2+C\lambda^2\|u\|^2.
\end{gather}
When $c_0\le b_0/a_0$, directly estimating \eqref{3l6} leads to 
\begin{equation}\label{3l7}
\|\nabla u\|\le \langle \lambda\rangle^2\|u\|^2+C\|f\|^2+\|g/\sqrt{a_0}\|_\Gamma^2
\end{equation}
for any $\lambda\in\mathbb{R}$.

2. We show \eqref{3l3}. As $Y$ is complete and $\Delta_y$ is essentially self-adjoint, we have the uniquely determined spectral resolution of $\Delta_y$ on $L^2(Y)$ that 
\begin{equation}
\Delta_y v=\int_0^\infty \rho^2\ dE_\rho (v),
\end{equation}
where $E_\rho$ is a projection-valued measure and $\supp(E_\rho)\subset[0,\infty)$. Fix some $\epsilon<1$ small. We will be using the spectral projector
\begin{equation}
\Pi_{\eta, \lambda} v=\int_{\eta-\epsilon /\langle \eta\rangle\langle \lambda\rangle^{1+\delta}}^{\eta+\epsilon /\langle \eta\rangle\langle \lambda\rangle^{1+\delta}}\ dE_\rho(v)
\end{equation}
which localises an $L^2(Y)$-function into a small spectral window of width $2\epsilon/\langle \eta\rangle\langle \lambda\rangle^{1+\delta}$ near $\eta$. Apply the spectral projector to the equation \eqref{3l1} and \eqref{3l2} to see
\begin{gather}
(\Delta_x-(\lambda^2-\eta^2))\Pi_{\eta, \lambda}u(x,y)=\Pi_{\eta, \lambda}f-\Pi_{\eta, \lambda}(\Delta_y -\eta^2)u, \text{ on }\Omega,\\
(i\pd_n+a_0(x)\lambda+ib_0(x))\Pi_{\eta, \lambda}u(x,y)=\Pi_{\eta, \lambda}g, \text{ on }\Gamma.
\end{gather}
Since for $\rho\in [\eta-\epsilon /\langle \eta\rangle\langle \lambda\rangle^{1+\delta},\eta+\epsilon /\langle \eta\rangle\langle \lambda\rangle^{1+\delta}]$,  
\begin{equation}
\abs{\rho^2-\eta^2}\le 2\epsilon\abs{\eta}/\langle \eta\rangle\langle \lambda\rangle+4\epsilon^2/\langle \eta\rangle^2\langle \lambda\rangle^2\le 6 \epsilon/\langle \lambda\rangle^{1+\delta},
\end{equation}
we have
\begin{multline}
\|\Pi_{\eta, \lambda}(\Delta_y -\eta^2)u\|=\left\|\int_{\eta-\epsilon /\langle \eta\rangle\langle \lambda\rangle^{1+\delta}}^{\eta+\epsilon /\langle \eta\rangle\langle \lambda\rangle^{1+\delta}} (\rho^2-\eta^2)\ dE_\rho(u)\right\| \\ \le 6\epsilon\langle \lambda\rangle^{-1-\delta}\|\Pi_{\eta,\lambda} u\|.
\end{multline}
Apply the resolvent estimate \eqref{2l9} on $X$ to see uniformly for $y, \eta$ and $\lambda\ge\lambda_0$, 
\begin{multline}
\|\Pi_{\eta,\lambda}u\|_{H^1(X)}^2\le C\langle \lambda\rangle^{2+2\delta}\|\Pi_{\eta,\lambda}f\|_{L^2(X)}^2+C\langle \lambda\rangle^{\delta}\|\Pi_{\eta,\lambda}g/\sqrt{a_0}\|_{L^2(\pd X)}^2\\
+36C\epsilon^2 \|\Pi_{\eta,\lambda}u\|^2.
\end{multline}
Now choose $\epsilon$ small to absorb the last term to have
\begin{equation}\label{3l4}
\|\Pi_{\eta,\lambda}u\|_{H^1(X)}^2\le C\langle \lambda\rangle^{2+2\delta}\|\Pi_{\eta,\lambda}f\|_{L^2(X)}^2+C\langle \lambda\rangle^{\delta}\|\Pi_{\eta,\lambda}g/\sqrt{a_0}\|_{L^2(\pd X)}^2.
\end{equation}
Since the choice of $\epsilon$ does not depend on $y, \eta, \lambda$, the constants in this estimate are again independent of $y, \eta, \lambda$. Now fix $\lambda$ and consider $\{(\eta-\epsilon /\langle \eta\rangle\langle \lambda\rangle^{1+\delta},\eta+\epsilon /\langle \eta\rangle\langle \lambda\rangle^{1+\delta})\}_{\eta}$ as a cover of $[0,\infty)$. It has a locally finite subcover of multiplicity 3. Sum up \eqref{3l4} to see
\begin{equation}
\|u\|_{H^1(X)}^2\le 3C\langle \lambda\rangle^{2+2\delta}\|f\|_{L^2(X)}^2+3C\langle \lambda\rangle^{\delta}\|g/\sqrt{a_0}\|_{L^2(\pd X)}^2.
\end{equation}
Integrating the previous inequality over $Y$ gives
\begin{equation}\label{3l10}
\|u\|_{L^2(\Omega)}^2+\|\nabla_x u\|_{L^2(\Omega)}^2\le 3C\langle \lambda\rangle^{2+2\delta}\|f\|_{L^2(\Omega)}^2+3C\langle \lambda\rangle^\delta\|g/\sqrt{a_0}\|_{L^2(\Gamma)}^2.
\end{equation}
Since $C$ does not depend on $\lambda$, we have the desired estimate for $\|u\|$. Visit \eqref{3l5} to bound $\|\nabla u\|$. Note that when $c_0\le b_0/a_0$, we could choose $\lambda_0=0$ and use \eqref{3l7} instead of \eqref{3l5}. This concludes the proof of \eqref{3l3}. 

3. We show \eqref{3l9}. Let the first-order elliptic pseudodifferential operator $\Lambda$ be defined via
\begin{equation}
\Lambda v=(1+\Delta_y)^\frac12 v=\int_{0}^\infty \langle \rho\rangle \ dE_\rho(v).
\end{equation}
Note that $[\Lambda, \Pi_{\eta,\lambda}]=0$ as they are in the same functional calculus. Apply $\Lambda$ to the equation \eqref{3l1} and \eqref{3l2} to see
\begin{gather}
(\Delta_x-(\lambda^2-\eta^2))\Pi_{\eta, \lambda}\Lambda u=\Pi_{\eta, \lambda}\Lambda f-\Pi_{\eta, \lambda}(\Delta_y -\eta^2)\Lambda u, \text{ on }\Omega,\\
(i\pd_n+a_0(x)\lambda)\Pi_{\eta, \lambda}\Lambda u(x,y)=\Pi_{\eta, \lambda}\Lambda g, \text{ on }\Gamma.
\end{gather}
Following the same reasoning as in the last step we end up with
\begin{equation}
\|\Lambda u\|_{L^2(\Omega)}^2\le 3C\langle \lambda\rangle^{2+2\delta}\|\Lambda f\|_{L^2(\Omega)}^2+3C\langle \lambda\rangle^{\delta}\|(\Lambda g)/\sqrt{a_0}\|_{L^2(\Gamma)}^2.
\end{equation}
Now note $\|u\|_{H^1}\sim \|\Lambda u\|+\|\nabla_x u\|$, and by \eqref{3l10} we have
\begin{equation}\label{3l11}
\|u\|_{H^1(\Omega)}^2\le C\langle \lambda\rangle^{2+2\delta}\|f\|_{H^1(\Omega)}^{2}+C\langle \lambda\rangle^{\delta}\|g/\sqrt{a_0}\|_{H^1(\Gamma)}^2.
\end{equation}
For $\abs{\lambda}\ge \lambda_0>0$ we have from \eqref{3l11} that
\begin{equation}\label{3l12}
\lambda^{-2}\|\nabla u\|^2_\Omega\le C\langle \lambda\rangle^{2\delta}\|f\|_{H^1(\Omega)}^{2}+C\langle \lambda\rangle^{-2+\delta}C\|g/\sqrt{a_0}\|_{H^1(\Gamma)}^2
\end{equation}
and \eqref{3l8} concludes the proof of \eqref{3l9}. Moreover when $c_0\le b_0/a_0$ and for $\lambda$ bounded, \eqref{3l11} implies
\begin{equation}
\|u\|_{L^2(\Omega)}^2\le C\|f\|_{H^1(\Omega)}^{2}+C\|g/\sqrt{a_0}\|_{H^1(\Gamma)}^2.
\end{equation}
With \eqref{3l12} we have \eqref{3l9} when $\lambda_0=0$.
\end{proof}


\section{Evolution and Energy Decay}
Let $\Delta=\Delta_x\otimes \id_y+\id_x\otimes \Delta_y$, and the domain $D(\Delta)$ be that of the minimal closed extension of $\Delta:H^2_\pd(\Omega)\rightarrow L^2(\Omega)$, that is, the closure of $C^\infty(\Omega)\cap H^1_\pd (\Omega)$ in $H^1_\pd (\Omega)$ under the graph norm 
\begin{equation}
\|u\|_{D(\Delta)}^2=\|u\|_{H^1(\Omega)}^2+\|\Delta u\|_{L^2(\Omega)}^2.
\end{equation}
Let $\mathcal{H}=H_\pd^1(\Omega)\oplus L_\pd^2(\Omega)$. If $b\equiv 0$ on $\Gamma$, we further restrict $\mathcal{H}$ to the set of $(\tilde u, \tilde v)\in H_\pd^1(\Omega)\oplus L_\pd^2(\Omega)$ that
\begin{equation}
\int_\Omega \tilde v+\int_\Gamma a(x,y)\tilde u(x,y)=0,
\end{equation}
a codimension-1 subspace of $H_\pd^1\oplus L_\pd^2$. Let the generator of our semigroup be 
\begin{equation}
A=\begin{pmatrix}
0 & \id\\
-\Delta & 0
\end{pmatrix}: D(A)\rightarrow \mathcal{H},
\end{equation}
and the domain is
\begin{equation}\label{4l5}
D(A)=\{(u,v)\in \mathcal{H}: A(u,v)\in \mathcal{H}, \pd_n u+a(x,y) v+b(x,y) u=0 \text{ on } \Gamma\},
\end{equation}
which is equivalent to the set of all $(u,v)\in \mathcal{H}$, $u\in D(\Delta)$, $v\in H^1_\pd(\Omega)$ and $\pd_n u+a(x,y) v+b(x,y) u=0 \text{ on } \Gamma$.
Note $D(A)$ is dense in $\mathcal{H}$, and 
\begin{equation}
\{(u,v)\in H^2_\pd(\Omega)\oplus H^1_\pd(\Omega): \pd_n u+a(x,y) v+b(x,y) u=0 \text{ on } \Gamma\}
\end{equation}
is dense in $D(A)$. Define the energy seminorm $E$ on $\mathcal{H}$ via
\begin{equation}
\|(u,v)\|_E^2=\|\nabla u\|_{L^2(\Omega)}^2+\|v\|_{L^2(\Omega)}^2+\|\sqrt{b}u\|_{L^2(\Gamma)}^2.
\end{equation}
\begin{lemma}
$E$ is a norm on $\mathcal{H}$ and is equivalent to the $H^1\oplus L^2$-norm. $A$ is maximally dissipative with respect to the energy norm $E$ and generates a contraction semigroup $e^{tA}$ on $\mathcal{H}$. 
\end{lemma}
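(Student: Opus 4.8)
The plan is to equip $\mathcal{H}$ with the energy inner product and verify the three hypotheses of the Lumer--Phillips theorem: density of $D(A)$ (already recorded above), dissipativity of $A$, and surjectivity of $I-A$. Before that one must check that $E$ is genuinely a complete norm equivalent to the $H^1\oplus L^2$-norm, so that $(\mathcal{H},E)$ is a Hilbert space. The bound $\|(u,v)\|_E\le C(\|u\|_{H^1}+\|v\|_{L^2})$ is immediate, since $\|\sqrt b u\|_{L^2(\Gamma)}^2\le \|b\|_{L^\infty}\|u\|_{L^2(\Gamma)}^2\le C\|u\|_{H^1(\Omega)}^2$ by the trace theorem.

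The reverse inequality amounts to a Poincar\'e-type estimate $\|u\|_{L^2(\Omega)}^2\le C(\|\nabla u\|_{L^2(\Omega)}^2+\|\sqrt b u\|_{L^2(\Gamma)}^2)$, which I would establish in three cases matching the running hypotheses. If $Y$ is compact and $b\not\equiv 0$, I argue by contradiction and Rellich compactness on the compact manifold $\Omega$: a normalised sequence with $\|\nabla u_n\|+\|\sqrt b u_n\|_\Gamma\to 0$ converges in $L^2(\Omega)$ and in $L^2(\Gamma)$ to a nonzero constant that must vanish on the positive-measure set $\{b>0\}$, a contradiction. If $0<c_0\le b/a_0$ (the case allowing non-compact $Y$), I first prove a uniform cross-sectional Poincar\'e--Robin inequality $\|w\|_{L^2(X)}^2\le C(\|\nabla w\|_{L^2(X)}^2+\|\sqrt{a_0}w\|_{L^2(\pd X)}^2)$ on the compact $X$ by the same compactness argument (using $a_0\not\equiv 0$), apply it to each slice $u(\cdot,y)$, and integrate in $y$; here $b\ge c_0 a_0$ controls the slices uniformly and prevents mass from escaping in the non-compact $Y$-direction. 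In the remaining case $b\equiv 0$ with $Y$ compact, where $\mathcal{H}$ is the codimension-one subspace $\{\int_\Omega v+\int_\Gamma au=0\}$, I split $u=\bar u+u'$ into its mean and mean-zero part; the zero-mean Poincar\'e inequality controls $u'$ by $\|\nabla u\|$, while the constraint together with the trace estimate pins down $|\bar u|\le C(\|v\|+\|\nabla u\|)$ because $\int_\Gamma a>0$. This makes $E$ nondegenerate and equivalent to the $H^1\oplus L^2$-norm, so $(\mathcal{H},E)$ is a Hilbert space.

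For dissipativity, write $U=(u,v)$, $AU=(v,-\Delta u)$, and compute in the energy inner product. Green's formula on $\Omega$, with the boundary contribution on $X\times\pd Y$ annihilated by the self-adjoint condition chosen there and the contribution on $\Gamma$ retained, gives $\langle -\Delta u,v\rangle=-\langle\nabla u,\nabla v\rangle+\langle \pd_n u,v\rangle_\Gamma$. The two gradient terms combine into a purely imaginary quantity, so $\cre\langle AU,U\rangle_E=\cre[\langle\pd_n u,v\rangle_\Gamma+\langle bv,u\rangle_\Gamma]$. Substituting the domain condition $\pd_n u=-av-bu$ from \eqref{4l5} and using that $b$ is real, the $b$-terms cancel in real part and one is left with $\cre\langle AU,U\rangle_E=-\|\sqrt a v\|_{L^2(\Gamma)}^2\le 0$, so $A$ is dissipative. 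For non-compact $Y$ these integrations by parts are first justified on the smooth functions stated to be dense in $D(A)$ and then passed to the limit.

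For maximality I solve $(I-A)(u,v)=(F,G)$ for arbitrary $(F,G)\in\mathcal{H}$. Eliminating $v=u-F$ reduces the system to the elliptic problem $(\Delta+1)u=F+G$ on $\Omega$ with the Robin condition $\pd_n u+(a+b)u=aF$ on $\Gamma$, which I solve weakly by Lax--Milgram: the form $B(u,w)=\langle\nabla u,\nabla w\rangle+\langle u,w\rangle+\langle(a+b)u,w\rangle_\Gamma$ on $H^1_\pd(\Omega)$ is bounded (trace theorem, $a,b\in L^\infty$) and coercive with $\cre B(u,u)\ge\|u\|_{H^1}^2$ since $a+b\ge 0$, while $w\mapsto\langle F+G,w\rangle+\langle aF,w\rangle_\Gamma$ is a bounded functional. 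The resulting $u\in H^1_\pd(\Omega)$ satisfies $\Delta u=F+G-u\in L^2$, hence $u\in D(\Delta)$, and $v=u-F\in H^1_\pd(\Omega)$, so $(u,v)\in D(A)$; in the $b\equiv 0$ case one checks via the divergence theorem and the self-adjoint condition on $X\times\pd Y$ that $I-A$ preserves the codimension-one subspace, so the solution lands in $\mathcal{H}$. Lumer--Phillips then yields that $A$ generates a contraction semigroup $e^{tA}$. I expect the main obstacle to be the lower bound in the norm equivalence when $Y$ is non-compact, where compactness of $\Omega$ is unavailable and one must instead run the fibrewise Poincar\'e--Robin inequality with a constant uniform in $y$, crucially relying on $b\ge c_0 a_0$, together with the bookkeeping of the codimension-one constraint in the complementary $b\equiv 0$ case; the dissipativity and Lax--Milgram steps are routine once Green's formula and the trace theorem are in hand.
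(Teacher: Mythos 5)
Your proposal is correct and is essentially the paper's own route: the paper proves this lemma by citing \cite[Proposition 3.1]{wan21} together with the single displayed computation \eqref{4l1}, and your Lumer--Phillips argument---norm equivalence via Poincar\'e-type estimates, the Green's-formula computation that lands exactly on $\cre\langle A(u,v),(u,v)\rangle_E=-\|\sqrt{a}\,v\|_{L^2(\Gamma)}^2$, and maximality via Lax--Milgram for $I-A$---is precisely the content that citation encapsulates. Your reading that the lower bound in the norm equivalence genuinely requires the running hypotheses (either $Y$ compact, with the codimension-one constraint handling $b\equiv 0$, or $b\ge c_0a_0$ handled by the fibrewise Poincar\'e--Robin inequality on $X$ integrated over $Y$) is also the correct interpretation of the lemma's implicit assumptions.
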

\begin{proof}
See \cite[Proposition 3.1]{wan21}. Note that via a direct computation we have
\begin{equation}\label{4l1}
\cre\langle A(u,v), (u,v)\rangle_E=-\int_\Gamma a(x,y)\abs{v}^2\le 0. 
\end{equation}
This demonstrates the dissipative nature of $A$. 
\end{proof}

\begin{proposition}
Assume the resolvent estimate \eqref{1l1} holds with parameter $\delta\ge 0$. Then $A$ has no purely imaginary spectrum, and there exists $C>0$ that for $\lambda\in\mathbb{R}$.
\begin{equation}\label{4l2}
\|(A+i\lambda)^{-1}\|_{E\rightarrow E}\le C\langle \lambda\rangle^{2+\delta},
\end{equation} 
either if $Y$ is compact, or if $0<c_0\le b/a_0$ on $\Gamma$. 
\end{proposition}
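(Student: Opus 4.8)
The plan is to solve the resolvent equation $(A+i\lambda)(u,v)=(f_1,f_2)$ by reducing it to the stationary impedance problem on $\Omega$ and then invoking the product resolvent estimates \eqref{3l3} and \eqref{3l9}. Reading off the two components of $(A+i\lambda)(u,v)=(f_1,f_2)$ gives $v=f_1-i\lambda u$ and $(\Delta-\lambda^2)u=i\lambda f_1-f_2$, while $(u,v)\in D(A)$ together with $v=f_1-i\lambda u$ converts the boundary relation $\pd_n u+av+bu=0$ into $i\pd_n u+a\lambda u+ibu=-iaf_1$ on $\Gamma$. Since $E$ is equivalent to the $H^1\oplus L^2$-norm, it suffices to bound $\|\nabla u\|$, $\langle\lambda\rangle\|u\|$ (which controls $\|v\|$ via $v=f_1-i\lambda u$), and the trace term $\|\sqrt{b}u\|_\Gamma$, each by $C\langle\lambda\rangle^{2+\delta}\|(f_1,f_2)\|_E$.

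The main structural mismatch is that \eqref{3l3} and \eqref{3l9} require the impedance coefficients to depend on $x$ alone, whereas $a,b$ depend on $(x,y)$. I would therefore first record the dissipation identity: pairing $(A+i\lambda)(u,v)$ with $(u,v)$ in the energy inner product and using \eqref{4l1} gives $\|\sqrt{a}v\|_\Gamma^2=-\cre\langle(f_1,f_2),(u,v)\rangle_E\le \|(f_1,f_2)\|_E\|(u,v)\|_E$. Since $\lambda u=i(v-f_1)$, this controls $\langle\lambda\rangle\|\sqrt{a}u\|_\Gamma$ by $\|(f_1,f_2)\|_E+(\|(f_1,f_2)\|_E\|(u,v)\|_E)^{1/2}$ for $\abs\lambda\ge\lambda_0$. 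I then fix $x$-only comparison coefficients $a_0(x)$ and $b_0(x)$ (taking $b_0=0$ when $Y$ is compact and $b_0=c_0a_0$ when $c_0\le b/a_0$, so $0\le b_0\le a_0$) and rewrite the boundary relation as $i\pd_n u+a_0\lambda u+ib_0u=g$ with $g=-iaf_1-(a-a_0)\lambda u-i(b-b_0)u$. By Assumption \ref{1t3} one has $a_0\sim a$ and $\abs{a-a_0}+\abs{b-b_0}\le Ca_0$, so $\|g/\sqrt{a_0}\|_\Gamma\le C(\|\sqrt{a}f_1\|_\Gamma+\langle\lambda\rangle\|\sqrt{a}u\|_\Gamma)$, and the dissipation bound turns this into $\|g/\sqrt{a_0}\|_\Gamma^2\le C\|(f_1,f_2)\|_E^2+C\|(f_1,f_2)\|_E\|(u,v)\|_E$.

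Next I would use linearity of the solution operator to split $u=u_a+u_b$ by interior source: $(\Delta-\lambda^2)u_a=i\lambda f_1$ under the homogeneous condition $i\pd_n u_a+a_0\lambda u_a+ib_0u_a=0$, and $(\Delta-\lambda^2)u_b=-f_2$ under the inhomogeneous data $g$. As $f_1\in H^1_\pd(\Omega)$, the source $i\lambda f_1$ lies in $H^1$ with vanishing boundary data, so \eqref{3l9} gives $\|u_a\|^2+\langle\lambda\rangle^{-2}\|\nabla u_a\|^2\le C\langle\lambda\rangle^{2\delta}\lambda^2\|f_1\|_{H^1}^2\le C\langle\lambda\rangle^{2+2\delta}\|(f_1,f_2)\|_E^2$, that is $\langle\lambda\rangle\|u_a\|+\|\nabla u_a\|\le C\langle\lambda\rangle^{2+\delta}\|(f_1,f_2)\|_E$; using the $H^1$-estimate \eqref{3l9} here instead of the lossier \eqref{3l3} is exactly what saves one power of $\langle\lambda\rangle$. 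For $u_b$, \eqref{3l3} with the bound on $\|g/\sqrt{a_0}\|_\Gamma$ gives $\|u_b\|^2+\langle\lambda\rangle^{-2}\|\nabla u_b\|^2\le C\langle\lambda\rangle^{2+2\delta}\|(f_1,f_2)\|_E^2+C\langle\lambda\rangle^\delta\|(f_1,f_2)\|_E\|(u,v)\|_E$. Collecting $\|\nabla u\|$, $\langle\lambda\rangle\|u\|$, and $\|\sqrt{b}u\|_\Gamma$ (the last dominated by $\|u\|_{H^1}$ through the trace theorem) then yields $\|(u,v)\|_E\le C\langle\lambda\rangle^{2+\delta}\|(f_1,f_2)\|_E+C\langle\lambda\rangle^{1+\delta/2}(\|(f_1,f_2)\|_E\|(u,v)\|_E)^{1/2}$. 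Young's inequality absorbs the last term precisely because $2(1+\delta/2)=2+\delta$, and this self-improving absorption closing at the sharp power is the crux of the argument.

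It remains to treat low frequencies and deduce invertibility. When $c_0\le b/a_0$ the product estimates hold down to $\lambda_0=0$, and near $\lambda=0$ the error terms either carry a factor of $\lambda$ or are controlled by the coercivity of $\|\nabla\cdot\|^2+\|\sqrt{b}\cdot\|_\Gamma^2$; when $Y$ is compact and $b$ may vanish I would instead argue on the compact set $\abs\lambda\le\lambda_0$ by a normal-families contradiction as in Lemma \ref{2t2}, using that $\Delta_y$ has discrete spectrum and that the codimension-one restriction of $\mathcal H$ (when $b\equiv0$) removes the constant $y$-mode obstructing invertibility at $\lambda=0$. Finally, the a priori bound $\|(u,v)\|_E\le C\langle\lambda\rangle^{2+\delta}\|(A+i\lambda)(u,v)\|_E$ shows $A+i\lambda$ is injective with closed range; combined with the contraction-semigroup property (whereby $A+i\lambda-s$ is invertible for every $s>0$ since $s-i\lambda$ lies in the right half-plane resolvent set of $A$), applying the estimate to $(A+i\lambda-s)^{-1}(f_1,f_2)$ and absorbing $s\|(u,v)\|_E$ shows these resolvents stay uniformly bounded as $s\downarrow0$, whence $i\lambda\in\rho(A)$ for all real $\lambda$, $A$ has no purely imaginary spectrum, and \eqref{4l2} follows. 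The main obstacle throughout is reconciling the $(x,y)$-dependence of the damping with the $x$-only product estimates, which is exactly what the dissipation-driven rewriting and the absorption in the third paragraph accomplish.
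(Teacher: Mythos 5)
Your proposal is correct and follows essentially the same route as the paper: the same dissipation identity \eqref{4l1} controlling $\langle\lambda\rangle\|\sqrt{a}\,u\|_{L^2(\Gamma)}$, the same replacement of the $(x,y)$-dependent damping by $x$-only coefficients $a_0,b_0$ with errors bounded via Assumption \ref{1t3}, and the same splitting of the interior source into an $H^1$ part $i\lambda f_1$ (estimated by \eqref{3l9}) and an $L^2$ part $f_2$ carrying the boundary data (estimated by \eqref{3l3}), with the low-frequency regime handled as in the paper (compactness/discreteness when $Y$ is compact, the trace term of the energy norm when $b\ge c_0a_0$). The differences are only presentational: the paper packages the argument as a quasimode contradiction with $E$-normalised sequences, whereas you run the contrapositive as a direct a priori bound closed by Young's inequality (the exponent bookkeeping $2(1+\delta/2)=2+\delta$ playing the role of the paper's $o(\langle\lambda_n\rangle^{-2-\delta/2})$ tracking), and you make explicit, via the $s\downarrow 0$ resolvent limit, the surjectivity step that the paper leaves implicit.
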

\begin{proof}
1. Fix some $\lambda_0>0$. Assume \eqref{4l2} is not true for $\{\abs{\lambda}\ge \lambda_0\}$. Then there are $\abs{\lambda_n}\ge \lambda_0$ and $(u_n, v_n)\in D(A)$ with 
\begin{equation}
\|(u_n,v_n)\|_E^2=\|\nabla u_n\|^2+\|v_n\|^2+\|\sqrt{b} u_n\|_\Gamma^2\equiv 1
\end{equation}
such that 
\begin{equation}
\left(A+ i\lambda_n\right)(u_n, v_n)=o_E(\langle \lambda_n\rangle^{-2-\delta})=o_\mathcal{H}(\langle \lambda_n\rangle^{-2-\delta}).
\end{equation}
This can be reduced to
\begin{gather}\label{4l4}
\Delta u_n-\lambda_n^2 u_n=o_{L^2}(\langle \lambda_n\rangle^{-2-\delta})+o_{H^1}(\langle \lambda_n\rangle^{-1-\delta}),\\
\label{4l6}
v_n=-i\lambda_n u_n+o_{H^1}(\langle \lambda_n\rangle^{-2-\delta}).
\end{gather}
Now \eqref{4l1} implies
\begin{equation}
\|\sqrt{a} v_n\|^2_\Gamma=-\cre\langle (A+i\lambda_n)(u_n,v_n), (u_n,v_n)\rangle_E=o(\langle \lambda_n\rangle^{-2-\delta}).
\end{equation}
and
\begin{equation}\label{4l3}
\|\sqrt{a}u_n\|_{\Gamma}\le \abs{\lambda_n}^{-1}(\|\sqrt{a}v_n\|_\Gamma+o(\langle \lambda_n\rangle^{-2-\delta}))=o(\langle \lambda_n\rangle ^{-2-\delta/2}).
\end{equation}
Note this is the only place we used $\abs{\lambda_n}\ge\lambda_0>0$. Pair \eqref{4l4} with $u_n$ and take the real part to see
\begin{equation}\label{4l7}
\|\nabla u_n\|^2+\|\sqrt{b}u_n\|_\Gamma^2=\abs{\lambda_n}^2\|u_n^2\|+o(\langle \lambda_n\rangle^{-1-\delta}).
\end{equation}
Use \eqref{4l5} to see
\begin{gather}
\Delta u_n-\lambda_n^2 u_n=o_{L^2}(\langle \lambda_n\rangle^{-2-\delta})+o_{H^1}(\langle \lambda_n\rangle^{-1-\delta}),\\
i\pd_n +a\lambda_n u_n+ib u_n=a o_{H^{1/2}(\Gamma)}(\langle \lambda_n\rangle^{-2-\delta}).
\end{gather}
By Assumption \ref{1t3}, we have $c_0\le a/a_0\le 1$ and
\begin{equation}
i\pd_n u_n+a_0\lambda_n u_n+i b_0 u_n=\lambda_n (a-a_0)u_n-i(b-b_0)u_n+ao_{\Gamma}(\langle \lambda_n\rangle^{-2-\delta})=g_n,
\end{equation}
where $b_0(x)=\inf_Y b(x,y)\ge 0$. Note $a, b\le a_0$ and use \eqref{4l3} to estimate
\begin{equation}
g_n=\sqrt{a_0}o_{L^2(\Gamma)}(\langle \lambda_n\rangle^{-1-\delta/2}).
\end{equation}
Thus we have turned the quasimodes with boundary damping $a(x,y), b(x,y)$ into those with $a_0(x), b_0(x)$:
\begin{gather}
\Delta u_n-\lambda_n^2 u_n=f^0_n+f^1_n, \text{ on }\Omega\\
\label{4l8}
i\pd_n u_n +a_0\lambda_n u_n+ib_0 u_n=\sqrt{a_0} o_{\Gamma}(\langle \lambda_n\rangle^{-1-\delta/2})=\sqrt{a_0} g_n, \text{ on }\Gamma\\
f^0_n=o_{L^2}(\langle \lambda_n\rangle^{-2-\delta}), \ f^1_n=o_{H^1}(\langle \lambda_n\rangle^{-1-\delta}).
\end{gather}

2. By the standard existence theorem, the problem 
\begin{gather}
(\Delta w_n-\lambda_n^2 w_n)=f^0_n,\\
i\pd_n w_n +a_0\lambda_n w_n+ib_0 w_n=\sqrt{a_0} g_n
\end{gather}
has a unique solution $w_n\in D(\Delta)\subset H^1_\pd(\Omega)$: see \cite[Proposition 2.8, Proposition 4.2]{wan21}, \cite[Theorem 4.11]{mcl00} for details. Apply the resolvent estimate \eqref{3l3} to see
\begin{equation}
\|w_n\|^2\le C \langle \lambda_n\rangle^{2+2\delta} \|f^0_n\|^2+C\langle \lambda_n\rangle^\delta\|g_n\|_{\Gamma}^2=o(\langle \lambda_n\rangle^{-2}).
\end{equation}
Meanwhile $u_n-w_n$ satisfies
\begin{gather}
(\Delta (u_n-w_n)-\lambda_n^2 (u_n-w_n))=f^1_n,\\
(i\pd_n +a_0\lambda_n+ib_0)(u_n-w_n)=0.
\end{gather}
Apply \eqref{3l9} to see
\begin{equation}
\|u_n-w_n\|^2\le C\langle \lambda_n\rangle^{2\delta} \|f^1_n\|_{H^1}^2=o(\langle \lambda_n\rangle^{-2}).
\end{equation}
Thus $\|u_n\|=o(\langle \lambda_n\rangle^{-1})$. By \eqref{4l6} and \eqref{4l7} we have
\begin{equation}
1\equiv \|\nabla u_n\|^2+\|v_n\|^2+\|\sqrt{b} u_n\|_\Gamma^2=2\abs{\lambda_n}^2\|u_n\|^2+o(\langle \lambda_n\rangle^{-1-\delta})=o(1),
\end{equation}
which gives the desired contradiction for $\abs{\lambda}\ge \lambda_0>0$. 

3. When $Y$ is compact, we show $0\notin \operatorname{Spec}(A)$. By the Rellich theorem we know $D(A)\hookrightarrow \mathcal{H}$ compactly. Furthermore $\id-A$ is surjective from $D(A)$ to $\mathcal{H}$. Hence $\operatorname{Spec}(A)$ is discrete, and $0\in \operatorname{Spec}(A)$ only if $0$ is an eigenvalue. Indeed, if $A(u,v)=0$, we have $v=0$ and $\Delta u=0$ and then
\begin{equation}
\|(u,v)\|_E^2=\|\nabla u\|^2+\|v\|^2+\|\sqrt{b}u\|_\Gamma^2=0.
\end{equation}
Therefore $(u,v)=0$ since $E$ is point-separating. Then $i(-2\lambda_0, 2\lambda_0)$ is disjoint from $\operatorname{Spec}(A)$ for some $\lambda_0>0$ and we have \eqref{4l2}. 

4. When $Y$ is non-compact and $c_0\le b/a_0$ on $\Gamma$, we have
\begin{equation}
\|\sqrt{a} u_n\|_\Gamma\le C \|\sqrt{b} u_n\|_\Gamma\le \|(u_n, v_n)\|_E\equiv 1
\end{equation}
as $\abs{\lambda_n}\rightarrow 0$ and this fixes \eqref{4l3} for all $\abs{\lambda_n}\ge \lambda_0=0$. Moreover, in \eqref{4l8} we have $b_0\ge c_0 a_0$ and the resolvent estimates \eqref{3l3} and \eqref{3l9} work for $\lambda_0=0$, that is, for all $\lambda$. This concludes the proof. 
\end{proof}
We cite \cite[Theorem 2.4]{bt10} to characterise the polynomial energy decay:
\begin{theorem}[Borichev-Tomilov]\label{4t1}
Let $A$ be a maximal dissipative operator that generates a contraction $C^0$-semigroup in a Hilbert space $\mathcal{H}$ and assume that $i\setr\cap \operatorname{Spec}(A)$ is empty. Fix $k>0$, and the following are equivalent:
\begin{enumerate}[wide]
\item There exists $C>0$ such that for any $\abs{\lambda}>0$ one has
\begin{equation}
\left\|(A-i\lambda)^{-1}\right\|_{\mathcal{H}\rightarrow \mathcal{H}}\le C\abs{\lambda}^k.
\end{equation}
\item There exists $C>0$ such that
\begin{equation}
\left\|e^{tA}A^{-1}\right\|_{\mathcal{H}\rightarrow \mathcal{H}}\le C t^{-\frac{1}{k}}.
\end{equation}
\end{enumerate}
\end{theorem}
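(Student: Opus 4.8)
The plan is to prove the two implications separately. The implication $(2)\Rightarrow(1)$ is soft, while $(1)\Rightarrow(2)$ carries the real content and is exactly where the Hilbert space structure is indispensable. Throughout write $T(t)=e^{tA}$ for the contraction semigroup and $R(s)=(s-A)^{-1}$, so that $(A-i\lambda)^{-1}=-R(i\lambda)$ and the two operators have equal norm. Dissipativity places $\operatorname{Spec}(A)\subset\{\cre s\le 0\}$, so $R$ is holomorphic on the open right half-plane with $\nor{R(s)}\le(\cre s)^{-1}$, and by hypothesis $i\setr\subset\rho(A)$; condition $(1)$ then reads $\nor{R(i\eta)}\le C\ang{\eta}^{k}$ for all $\eta\in\setr$, after absorbing the bounded behaviour near $\eta=0$ where $R(i\eta)$ is continuous.

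For $(1)\Rightarrow(2)$ I would first promote the bound on the imaginary axis into the left half-plane. The Neumann series $R(s)=R(i\eta)\sum_{n\ge 0}\bigl((i\eta-s)R(i\eta)\bigr)^{n}$ converges whenever $\abs{\cre s}\,\nor{R(i\eta)}<1$, i.e. for $\cre s>-c\ang{\eta}^{-k}$, so $R$ extends holomorphically to the curved region $\Omega=\{\cre s>-c\ang{\cim s}^{-k}\}$ with $\nor{R(s)}\le C\ang{\cim s}^{k}$ there. Next I would represent the smoothed semigroup by an inverse Laplace integral, for $x\in D(A)$,
\begin{equation}
T(t)A^{-1}x=\frac{1}{2\pi i}\int_{\sigma-i\infty}^{\sigma+i\infty}e^{st}\,R(s)A^{-1}x\,ds,\qquad \sigma>0,
\end{equation}
using $R(s)A^{-1}=s^{-1}\bigl(R(s)+A^{-1}\bigr)$ for the extra decay, and then deform the contour from $\cre s=\sigma$ onto the boundary curve $\partial\Omega$. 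On $\partial\Omega$ one has $\cre s\asymp-\ang{\cim s}^{-k}$, so $e^{st}$ supplies a genuine gain $e^{-ct\ang{\cim s}^{-k}}$; balancing this against the polynomial growth $\ang{\cim s}^{k}$ of the resolvent and optimising the frequency cut-off $\abs{\cim s}\sim t^{1/k}$ is what produces the rate $t^{-1/k}$.

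The sharpness of the exponent is the main obstacle, and it is precisely here that the Hilbert space is needed. Estimating the contour integral by the pointwise bound $\nor{R(s)}\le C\ang{\cim s}^{k}$ alone does not even converge absolutely for $k\ge 1$; regularising by a high power $A^{-m}$, $m>k+1$, restores convergence but, after interpolating back with the moment inequality, only yields $t^{-1/k+\epsilon}$, and the borderline case reproduces the logarithmic loss of the general Banach space theorem of Batty--Duyckaerts. To reach the sharp rate I would replace pointwise control by $L^{2}$ control on vertical lines through the vector-valued Plancherel identity
\begin{equation}
\int_{\setr}\nor{R(\epsilon+i\eta)x}^{2}\,d\eta=2\pi\int_{0}^{\infty}e^{-2\epsilon t}\nor{T(t)x}^{2}\,dt,\qquad \epsilon>0,
\end{equation}
together with its analogue for $R(\epsilon+i\eta)A^{-1}$ and a differentiated version bounding $\partial_{\eta}R$. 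Feeding these square-function estimates into the contour bound in place of the supremum removes the loss and delivers $\nor{T(t)A^{-1}}\le Ct^{-1/k}$. Two technical points remain: the contour deformation must be justified for the unbounded generator (the smoothing factors legitimise Cauchy's theorem on $\Omega$ and control the integrand at infinity), and the limit $\epsilon\downarrow 0$ in the Plancherel identity must be taken, which again uses boundedness of $T$ and the polynomial bound on the axis.

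For the converse $(2)\Rightarrow(1)$ I would argue by contradiction: if $\nor{R(i\eta_{n})}\ge n\ang{\eta_{n}}^{k}$ along some $\abs{\eta_{n}}\to\infty$, pick near-unit $x_{n}$ with $\nor{R(i\eta_{n})x_{n}}$ large and set $y_{n}=R(i\eta_{n})x_{n}$, so that $(A-i\eta_{n})y_{n}=-x_{n}$ is small relative to $y_{n}$; these approximate modes at frequency $\eta_{n}$, tested against the decay estimate $(2)$ through the associated orbits, force decay strictly slower than $t^{-1/k}$, contradicting the hypothesis. Equivalently one can bound $R(i\eta)$ directly by inserting the Laplace representation of the resolvent and using the moment inequality $\nor{T(t)A^{-\theta}}\le Ct^{-\theta/k}$ for $0<\theta<1$ to control the relevant integrals. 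Either route is routine once $(1)\Rightarrow(2)$ is settled, since the substantive rate-sharp analysis lives entirely in the forward implication.
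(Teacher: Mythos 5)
The paper does not prove Theorem \ref{4t1} at all: it is quoted as a black box from Borichev--Tomilov \cite[Theorem 2.4]{bt10}, so your attempt can only be compared against that source. One cosmetic point first: as literally stated, condition (1) for \emph{all} $\abs{\lambda}>0$ is vacuous (letting $\lambda\rightarrow 0$ would force $\nor{A^{-1}}=0$); it is meant as a large-$\lambda$ asymptotic, and you silently and correctly repaired this by working with $\ang{\eta}^{k}$.

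Your treatment of $(2)\Rightarrow(1)$ is essentially right in spirit but the first route you name is the wrong tool: the full-line Laplace representation $R(i\eta)x=\int_0^\infty e^{-i\eta t}T(t)x\,dt$ does not converge absolutely for a merely bounded semigroup, and even with the smoothing $A^{-1}$ one has $\int_0^\infty t^{-1/k}\,dt=\infty$ unless $k<1$. The robust elementary argument is the finite-horizon identity $x=e^{-is\tau}T(\tau)x+\int_0^\tau e^{-ist}T(t)y\,dt$ for $x=(is-A)^{-1}y$, followed by $T(\tau)x=T(\tau)A^{-1}(isx-y)$ and the choice $\tau\sim\abs{s}^{k}$, which yields $\nor{R(is)}=\bigo(\abs{s}^{k})$ in any Banach space; your quasimode contradiction can also be quantified to the same effect, but as written ("force decay strictly slower than $t^{-1/k}$") it is a gesture, not a proof.

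The genuine gap is in $(1)\Rightarrow(2)$, at exactly the point where the theorem's content lives. You correctly set up the Neumann-series extension to $\Omega=\{\cre s>-c\ang{\cim s}^{-k}\}$, correctly diagnose that pointwise resolvent bounds on a deformed contour reproduce only the Batty--Duyckaerts logarithmic loss, and correctly identify Plancherel as the Hilbert-space mechanism that removes it --- but the two ingredients you propose do not compose as described. Plancherel's identity $\int_\setr\nor{R(\epsilon+i\eta)x}^2d\eta=2\pi\int_0^\infty e^{-2\epsilon t}\nor{T(t)x}^2dt$ lives on \emph{vertical} lines; once you deform onto the curved boundary $\pd\Omega$, where $\cre s=-c\ang{\cim s}^{-k}$ varies with $\eta$, the Fourier structure is destroyed and the stated square-function bounds simply do not transfer to that contour. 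Moreover, even on vertical lines the identity only gives $\int\nor{R(\epsilon+i\eta)x}^2d\eta\le\pi M^2\epsilon^{-1}\nor{x}^2$, and you never explain how the $\epsilon^{-1}$ blow-up is traded against $t$ (the sharp proofs stay in $\cre s\ge 0$, take $\epsilon\sim 1/t$ or pass to the imaginary axis using $i\setr\subset\rho(A)$ with enough smoothing powers of $A^{-1}$, generate \emph{two} resolvent factors via $\pd_\eta R=-iR^2$, and split them by Cauchy--Schwarz between a Plancherel bound for $R(\cdot)x$ and one for the adjoint resolvent $R(\cdot)^*y$, together with a low/high frequency decomposition at $\abs{\eta}\sim t^{1/k}$ where the polynomial bound enters). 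That chain of reductions is precisely Borichev and Tomilov's proof; your sketch asserts its conclusion ("feeding these square-function estimates into the contour bound removes the loss") at the step where all the work is, so the proposal, while architecturally sound and well-informed, does not constitute a proof of the hard implication.
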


\begin{proof}[Proof of Theorem \ref{0t2}] 
The resolvent estimate \eqref{4l2} on the generator $A$ with Theorem \ref{4t1} of Borichev and Tomilov implies
\begin{equation}
E(u, t)^{\frac12}=\|e^{tA}A^{-1}A(u_0, u_1)\|_E\le Ct^{-\frac1{2+\delta}}(\|u_0\|_{H^2}+\|u_1\|_{H^1}),
\end{equation}
as claimed in Theorem \ref{0t2}. 
\end{proof}

\begin{proof}[Proof of Theorem \ref{0t1}]
Lemma \ref{2t1} implies that the geometric control implies the resolvent estimate \eqref{1l1} holds with parameter $\delta=0$. Invoke Theorem \ref{0t2} to see the decay. 
\end{proof}

\begin{proof}[Proof of Corollary \ref{1t1}]
Let $X$ be the Bunimovich stadium. Revisit \cite[Proposition 6]{nis13} to see for the equation \eqref{1l3} to \eqref{1l5} we have for $\abs{\mu}>1$,
\begin{equation}
\|u\|_{L^2(X)}^2\le C\lambda^2\|f\|_{L^2(X)}^2+C\|g\|_{L^2(\pd X)}^2,
\end{equation}
that is, the estimate \eqref{1l1} holds with parameter $\delta=2$. Invoke Theorem \ref{0t2} to get the decay. 
\end{proof}

\begin{proof}[Proof of Corollary \ref{1t2}]
Let $X, X^\sharp$ be described as in the statement. Invoke \cite[Proposition 2.1]{cv02} to see for the equation \eqref{1l3} to \eqref{1l5} we have for $\abs{\mu}\ge\lambda_0>0$,
\begin{equation}
\|u\|_{L^2(X)}^2\le C\lambda^{-2}\|f\|_{L^2(X)}^2+C\lambda^{-1}\|g\|_{L^2(\pd X)}^2,
\end{equation}
that is, the estimate \eqref{1l1} holds with parameter $\delta=1$. Invoke Theorem \ref{0t2} to get the decay. 
\end{proof}

\begin{proposition}\label{4t2}
Assume $a=a_0\in C^{1,1/2+\delta}(\pd X)$ for some $\delta>0$, $a_0(x)>0$ on $\pd X$ and $b\equiv 0$. Then there exist quasimodes $(\lambda_n, U_n)$, $\|U_n\|_\mathcal{H}\equiv 1$ such that $\|(A-i\lambda_n)U_n\|_{\mathcal{H}}=\bigo(\langle \lambda_n\rangle^{-2})$. This implies that the equation \eqref{1l2} to \eqref{1l4} cannot be stable at the rate $t^{-1/(2-\epsilon)}$ for any $\epsilon>0$.
\end{proposition}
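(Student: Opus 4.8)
The plan is to produce, for a sequence $\lambda_n\to\infty$, normalised quasimodes of the form $U_n=(u_n,i\lambda_n u_n)$ lying \emph{exactly} in $D(A)$, for which the residual reduces to a pure interior Helmholtz error. Indeed, if $u_n\in D(\Delta)$ satisfies the boundary condition $\pd_n u_n+i\lambda_n a_0 u_n=0$ on $\Gamma$ exactly (recall $a=a_0$ and $b\equiv0$), then $U_n\in D(A)$ and a direct computation gives $(A-i\lambda_n)U_n=(0,-(\Delta-\lambda_n^2)u_n)$, so that $\|(A-i\lambda_n)U_n\|_E=\|(\Delta-\lambda_n^2)u_n\|_{L^2(\Omega)}$. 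After the normalisation $\|U_n\|_E=1$, which forces $\|u_n\|_{L^2}\sim\lambda_n^{-1}$, it therefore suffices to build such $u_n$ with $\|(\Delta-\lambda_n^2)u_n\|_{L^2}=\bigo(\lambda_n^{-2})$, i.e.\ with relative interior error $\bigo(\lambda_n^{-1})$.

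I would construct $u_n$ by separation of variables, $u_n=w_n(x)\psi_n(y)$, where $\psi_n$ is a transverse eigenfunction $\Delta_y\psi_n=\eta_n^2\psi_n$ with $\eta_n\to\infty$ (an eigenfunction when $Y$ is compact, or a narrow spectral wave packet built from $E_\rho$ otherwise), and where $w_n$ is an exact eigenfunction of the non-self-adjoint Robin Laplacian on the cross-section, $\Delta_x w_n=k_n^2 w_n$ with $\pd_n w_n+i\lambda_n a_0 w_n=0$ on $\pd X$. Pairing the cross-sectional problem with $w_n$ and using Green's identity yields $k_n^2=\|\nabla w_n\|^2/\|w_n\|^2+i\lambda_n\|\sqrt{a_0}w_n\|_{\pd X}^2/\|w_n\|^2$, so $\cre k_n^2\ge0$ and $\cim k_n^2=\lambda_n\|\sqrt{a_0}w_n\|_{\pd X}^2/\|w_n\|^2\ge0$. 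With this $w_n$ and $\psi_n$ one computes $(\Delta-\lambda_n^2)u_n=(\cre k_n^2+\eta_n^2-\lambda_n^2+i\cim k_n^2)u_n$; choosing $\eta_n^2=\lambda_n^2-\cre k_n^2$ kills the real part and leaves the residual $|\cim k_n^2|\,\|u_n\|$. Thus the whole matter reduces to exhibiting, for large $\lambda_n$, a Robin eigenfunction whose normalised boundary mass $\|w_n\|_{\pd X}^2/\|w_n\|^2$ is $\bigo(\lambda_n^{-2})$, which then gives $\cim k_n^2=\bigo(\lambda_n^{-1})$ and the desired error.

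The key mechanism is the large-coupling (Dirichlet) limit: as $\lambda_n\to\infty$ the boundary condition $w_n|_{\pd X}=(i\lambda_n a_0)^{-1}\pd_n w_n$ degenerates to the Dirichlet condition, so the eigenbranch emanating from a fixed Dirichlet eigenvalue $\nu^2$ of $\Delta_x$ satisfies $k_n^2=\nu^2+\bigo(\lambda_n^{-1})$ with eigenfunction $w_n=\phi+\bigo(\lambda_n^{-1})$, where $\Delta_x\phi=\nu^2\phi$, $\phi|_{\pd X}=0$. To leading order the boundary trace is $w_n|_{\pd X}=(i\lambda_n a_0)^{-1}\pd_n\phi+\bigo(\lambda_n^{-2})$, whence $\|w_n\|_{\pd X}^2/\|w_n\|^2=\bigo(\lambda_n^{-2})$ as required. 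Making this rigorous is the main obstacle: one must justify the existence and first-order asymptotics of this eigenbranch for the non-self-adjoint Robin operator with the purely imaginary, $\lambda_n$-large boundary parameter $-i\lambda_n a_0$. I expect to do this by a direct corrector argument—adding an $\bigo(\lambda_n^{-1})$ boundary layer to $\phi$ to meet the Robin condition and controlling the interior remainder—where the hypothesis $a_0\in C^{1,1/2+\delta}$, $a_0>0$, enters to guarantee elliptic regularity up to $\pd X$ and the boundedness of $\pd_n\phi/a_0$ in the trace norm needed to close the estimates.

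Two bookkeeping points remain. First, the consistency $\eta_n^2=\lambda_n^2-\cre k_n^2$ must land on the transverse spectrum: when $Y$ is non-compact this is automatic, and when $Y$ is compact I would instead fix $\nu^2$ and solve $\lambda^2-\cre k^2(\lambda)=\eta_j^2$ for $\lambda=\lambda_n$, using the continuous dependence of the branch $k^2(\lambda)$ on $\lambda$ to hit each transverse eigenvalue $\eta_j^2\to\infty$. Second, since $\eta_n>0$ the transverse factor has $\int_Y\psi_n=0$, so both $\int_\Omega i\lambda_n u_n$ and $\int_\Gamma a_0 u_n$ vanish and $U_n$ indeed lies in the codimension-one space $\mathcal{H}$ used when $b\equiv0$. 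Finally, the quasimodes give $\|(A-i\lambda_n)^{-1}\|_{E\to E}\ge c\langle\lambda_n\rangle^{2}$; were the energy to decay at the rate $t^{-1/(2-\epsilon)}$, Theorem \ref{4t1} (whose spectral hypothesis is implied by any such decay) would force $\|(A-i\lambda)^{-1}\|\le C\langle\lambda\rangle^{2-\epsilon}$, contradicting the lower bound for large $\lambda_n$. Hence no rate $t^{-1/(2-\epsilon)}$ is possible.
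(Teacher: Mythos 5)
Your reduction to building $u_n=w_n(x)\psi_n(y)\in D(A)$ with the impedance condition holding exactly on $\Gamma$, so that $(A-i\lambda_n)U_n=(0,-(\Delta-\lambda_n^2)u_n)$, and your endgame via Theorem \ref{4t1} are both correct and coincide with the paper's; so does your ``key mechanism'', since the boundary trace $(i\lambda_n a_0)^{-1}\pd_n\phi$ attached to a Dirichlet eigenfunction $\phi$ is exactly the paper's corrector. The genuine gap is the step you route everything through: the existence of an \emph{exact} eigenpair $(k_n^2,w_n)$ of the non-self-adjoint problem $\Delta_x w_n=k_n^2 w_n$, $\pd_n w_n+i\lambda_n a_0 w_n=0$ on $\pd X$, with the quantitative asymptotics $k_n^2=\nu^2+\bigo(\lambda_n^{-1})$, $\|w_n\|_{\pd X}^2/\|w_n\|^2=\bigo(\lambda_n^{-2})$, plus continuity of the branch $\lambda\mapsto k^2(\lambda)$ needed to solve $\lambda^2-\cre k^2(\lambda)=\eta_j^2$ when $Y$ is compact. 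The boundary parameter here is purely imaginary and large, so the operator is not normal, and your proposed justification --- ``a direct corrector argument'' --- cannot deliver this: a corrector produces an approximate eigenfunction, and for non-normal operators approximate eigenfunctions do \emph{not} imply true spectrum nearby (they only bound the resolvent from below; the pseudospectrum can lie far from the spectrum). Making the eigenbranch rigorous would need different tools entirely (quantitative norm-resolvent convergence of the impedance problem to the Dirichlet problem, plus Riesz-projection/Kato-type perturbation theory for the non-self-adjoint family). A secondary slip: for non-compact $Y$ the matching is not ``automatic'', since $\operatorname{Spec}(\Delta_y)$ is unbounded but may have gaps; and where the transverse spectrum is continuous, $\psi_n$ must be a wave packet, so $(\Delta_y-\eta_n^2)\psi_n\neq 0$ and your exact cancellation of the real part is destroyed in any case.

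The repair is to notice that exactness buys you nothing: even the true eigenpair would leave the residual $\abs{\cim k_n^2}\,\|u_n\|=\bigo(\lambda_n^{-1})\|u_n\|$, which is precisely the size of the error committed by the corrector ansatz itself. So skip the spectral theory: take $w_n=\phi+\lambda_n^{-1}w^1$, where $\Delta_x\phi=\nu^2\phi$, $\phi|_{\pd X}=0$, and $w^1\in H^2(X)$ is any extension of the boundary data $w^1|_{\pd X}=i\pd_n\phi/a_0$, $\pd_n w^1|_{\pd X}=0$ (this is where $a_0>0$ and $a_0\in C^{1,1/2+\delta}$ enter, making $1/a_0$ a multiplier on $H^{3/2}(\pd X)$). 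Then the impedance condition holds exactly, the interior equation holds up to $\bigo_{L^2}(\lambda_n^{-1})$, and you choose $\lambda_n$ so that $\lambda_n^2-\nu^2$ lies in $\operatorname{Spec}(\Delta_y)$ (possible since $\Delta_y$ is unbounded), taking $\psi_n$ spectrally localized in a window of width $\lambda_n^{-2}$ about $\sqrt{\lambda_n^2-\nu^2}$, which contributes a further $\bigo(\lambda_n^{-1})$. This closes the proof in both the compact and non-compact cases --- and is, in fact, the paper's argument. Your remaining bookkeeping (membership of $U_n$ in the codimension-one space via $\int_Y\psi_n=0$, the normalisation $\|u_n\|\sim\lambda_n^{-1}$, and the passage from the quasimodes to the failure of the $t^{-1/(2-\epsilon)}$ rate) is correct and can be kept as is.
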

\begin{remark}
When $X=\mathbb{B}^d$ and $a(x)=1$, we constructed those quasimodes in \cite[Proposition 3.6, Proposition 4.5]{wan21} using special functions. 
\end{remark}
\begin{proof}
1. We begin by constructing a sequence of quasimodes on $X$. Fix an Dirichlet eigenfunction $w^0$ with respect to some eigenvalue $\mu^2$ where $\mu>0$, that is
\begin{gather}
(\Delta-\mu^2)w^0=0,\text{ on }X,\\
w^0=0,\text{ on } \pd X,
\end{gather}
and without loss of generality $\|w^0\|_X=1$. Now note the H\"{o}lder regularity of $a$ implies $a$ is a Sobolev multiplier on $H^{3/2}(\pd X)$, and being uniformly away from 0 makes $1/a$ one as well: also see \cite[Lemma 2.6]{wan21} and the reference within. Consider an extension $w^1\in H^2(X)$ from the given boundary data:
\begin{equation}
w^1=-i\pd_n w^0/a\in H^{\frac32}(\pd X), \ \pd_n w^1=0, \text{ on } \pd X. 
\end{equation}
Then $w_\lambda=w^0+\lambda^{-1}w^1$ forms a sequence of quasimodes on $X$:
\begin{gather}
(\Delta_x-\mu^2)w_\lambda=\lambda^{-1}(\Delta_x-\mu^2)w^1, \text{ on }X,\\
(i\pd_n+a\lambda)w_\lambda=0,\text{ on }\pd X,
\end{gather}
where $\|w_\lambda\|=1+\bigo(\lambda^{-1})$.

2. Since $\Delta_y$ is unbounded on $L^2(Y)$, there exist $\lambda_n>\mu$ with $\lambda_n\rightarrow \infty$ and $v_n\in L^2(Y)$ such that
\begin{equation}
v_n=\int_{\sqrt{\lambda_n^2-\mu^2}-\lambda_n^{-2}}^{\sqrt{\lambda_n^2-\mu^2}+\lambda_n^{-2}} dE_\rho(v_n), \ \|v_n\|_Y\equiv 1.
\end{equation}
In other words, $v_n$'s are spectrally supported inside the spectral windows $[\sqrt{\lambda_n^2-\mu^2}-\lambda_n^{-2},\sqrt{\lambda_n^2-\mu^2}+\lambda_n^{-2}]$ that shrink and shift to the infinity. Note $\abs{\rho^2-(\lambda_n^2-\mu^2)}=\bigo(\lambda_n^{-1})$ inside this spectral window and this implies
\begin{equation}
(\Delta_y-(\lambda_n^2-\mu^2)) v_n=\int_{\sqrt{\lambda_n^2-\mu^2}-\lambda_n^{-2}}^{\sqrt{\lambda_n^2-\mu^2}+\lambda_n^{-2}} (\rho^2-(\lambda_n^2-\mu^2))\ dE_\rho(v_n)=\bigo_{L^2(Y)}(\lambda_n^{-1}).
\end{equation}
Now let $u_n=w_{\lambda_n}(x)v_n(y)$. We have
\begin{multline}\label{4l9}
(\Delta-\lambda_n^2)u_n=v_n(\Delta_x-\mu^2)w_{\lambda_n}+w_{\lambda_n}(\Delta_y-(\lambda_n^2-\mu^2)) v_n\\
=\lambda_n^{-1}(\Delta_x-\mu^2)w^1+w_{\lambda_n}\bigo_{L^2(Y)}(\lambda_n^{-1})=\bigo_{L^2}(\lambda_n^{-1}),
\end{multline}
where $\|u_n\|=1+\bigo(\lambda_n^{-1})$. Pair \eqref{4l9} with $u_n$ and take the real part to see $\|\nabla u_n\|=\lambda_n+\bigo(1)$. Note $i\pd_n u_n+a_0 \lambda_n u_n=0$ on $\Gamma$ and hence $(u_n, i\lambda_n u_n)\in \mathcal{H}$, and $\|(u_n, i\lambda_n u_n)\|_\mathcal{H}=\sqrt{2}\lambda_n+\bigo(1)$. Let $U_n=(u_n, i\lambda_n u_n)/\|(u_n, i\lambda_n u_n)\|_\mathcal{H}$. Then
\begin{multline}
(A-i\lambda_n)U_n=\|(u_n, i\lambda_n u_n)\|_\mathcal{H}^{-1}(A-i\lambda_n)(u_n, i\lambda_n u_n)\\
=\bigo(\lambda_n^{-1})(0, -(\Delta-\lambda_n^2)u_n)=\bigo(\lambda_n^{-2})=o(\lambda_n^{-2+\epsilon}),
\end{multline}
for any $\epsilon>0$. Invoke Theorem \ref{4t1} to conclude the proof.
\end{proof}

\bibliography{bib}
\bibliographystyle{amsalpha}

\end{document}